\newcommand*\demph[1]{\textbf{\emph{#1}}}
\newcommand*\mul{\,}
\newcommand*\NN{\mathbf{N}}
\newcommand*\RR{\mathbf{R}}
\newcommand*\sym{{S}}
\NewDocumentCommand\GL{O{n}}{\mathsf{GL}(#1)}
\NewDocumentCommand\affrel{O{a}}{\stackrel{#1}\rightsquigarrow}
\NewDocumentCommand\Poly{O{k}}{\operatorname{P}_{#1}}
\newcommand*\Lin{\mathcal{L}}
\NewDocumentCommand\Graph{O{}O{}}{\Gamma^{#1}_{#2}}
\NewDocumentCommand\Gv{O{}O{}}{\spangle{\Graph[#1][#2]}}
\newcommand*\Tree{T}
\newcommand*\Tc{M}
\newcommand*\Tv{\spangle{T}}
\newcommand*\eldiff{\mathcal{F}}
\newcommand*\dappl[2]{#1\bracket{#2}}
\NewDocumentCommand\eldappl{mmO{}}{\dappl{#1}{#2}}
\NewDocumentCommand\eldfappl{mmO{}}{\dappl{\eldiff_{#3}\paren*{#1}}{#2}}
\newcommand*\graphprop[1]{\mathcal{#1}}
\newcommand*\Roots{\graphprop{R}}
\newcommand*\Parents{\graphprop{P}}
\newcommand*\Edges{\graphprop{E}}
\newcommand*\Vertices{\graphprop{V}}
\newcommand*\dgr{\operatorname{dep}}
\newcommand*\rvf[1]{{\mathfrak{X}_0}(\RR^{#1})}
\NewDocumentCommand\pvf{O{n}}{X^{#1}}
\NewDocumentCommand\psf{O{n}}{F^{#1}}
\NewDocumentCommand\fv{O{f}}{#1}
\newcommand*\fdual[1]{\fv_{#1}}
\newcommand*\rootcomp{1}
\newcommand*\func{\varphi}
\newcommand*\lfunc{\overline\func}
\newcommand*\tr{\triangleright}
\newcommand*\looproot{%
\begin{tikzpicture}[setree,]
	\placeroots{2}
	\jointrees{1}{1}
\end{tikzpicture}
}
\newcommand*\oneroot{
	\begin{tikzpicture}[setree]
		\placeroots{1}
	\end{tikzpicture}
}
\newcommand*\ffprime{
	\begin{tikzpicture}[setree]
		\placeroots{1}
		\children{child{node{}}}
	\end{tikzpicture}
	}
\newcommand*\intmap{integrator map\xspace}
\newcommand*\Intmap{Integrator map\xspace}
\newcommand*\intmaps{integrator maps\xspace}
\newcommand*\Intmaps{Integrator maps\xspace}
\newcommand*\Bseriesmap{B-series map\xspace}
\newcommand*\Bseriesmaps{B-series maps\xspace}
\newcommand*\email[1]{\href{mailto:#1}{\nolinkurl{#1}}}
\title{B-series methods are exactly the affine equivariant methods}
\author[1]{Robert I McLachlan\thanks{\email{r.mclachlan@massey.ac.nz}}}
\author[2]{Klas Modin\thanks{\email{klas.modin@chalmers.se}}}
\author[3]{Hans Munthe-Kaas\thanks{\email{hans.munthe-kaas@math.uib.no}}}
\affil[1]{Institute of Fundamental Sciences, Massey University, New Zealand} 
\affil[2]{Mathematical Sciences, Chalmers University of Technology, Sweden}
\affil[3]{Department of Mathematics, University of Bergen, Norway}
\author[4]{Olivier Verdier\thanks{\email{olivier.verdier@math.umu.se}}}
\affil[4]{Department of Mathematics and Mathematical Statistics, Ume\aa{} University, Sweden}
\begin{document}


\newcommand*\Man{\mathcal{M}}
\newcommand*\group[1]{\mathsf{#1}}
\NewDocumentCommand\Diff{O{\Man}}{\group{Diff}\IfNoValueTF{#1}{(\Man)}{(#1)}}
\newcommand*\dd{\mathrm{d}}
\newcommand*\supp{\operatorname{supp}}
\newcommand*\aff [2]{\operatorname{aff}(\RR^{#1},\RR^{#2})}

\newcommand*\Tvk{\spangle{T_k}}

\maketitle

\begin{abstract} 
	Butcher series, also called B-series, are a type of expansion, fundamental in the analysis of numerical integration. 
	Numerical methods that can be expanded in B-series are defined in all dimensions, so they correspond to \emph{sequences of maps}---one map for each dimension.
	A long-standing problem has been to characterise those sequences of maps that arise from B-series.
	This problem is solved here:
	we prove that a sequence of smooth maps between vector fields on affine  spaces has a B-series expansion if and only if it is \emph{affine equivariant}, meaning it respects all affine maps between affine spaces.
\end{abstract}

\begin{description}
	\item[Keywords] Butcher series · Equivariance · Aromatic series · Aromatic trees
	\item[Mathematics Subject Classification (2010)]  37C80 · 37C10 · 41A58
\end{description}


\section{Introduction} \label{sec:intro}

Let $\Phi(h,f)\colon \RR^{n} \to \RR^{n}$ be a numerical time-stepping method for the differential equation 
\begin{equation}\label{eq:plain_ode}
	\dot y = f(y), \quad y(0) = y_0\in\RR^{n}.
\end{equation}
That is, the time-stepping map $y_{k}\mapsto y_{k+1}$ is given by $y_{k+1} = \Phi(h,f)(y_{k})$, where $y_k \approx y(h k)$.
The convergence order of the method is obtained by comparing the Taylor expansion of $h\mapsto \Phi(h,f)(y_0)$ with the Taylor expansion of $h\mapsto y(h)$, using $\dot y = f(y)$ successively to avoid derivatives of~$y$.
If $\Phi(h,f)$ is a Runge--Kutta method, then the expansion is a linear combination of \emph{elementary differentials} of $f$.
For example, the first two terms for the midpoint method $y_{k+1} - y_{k} = h f(\frac{y_{k+1}+y_{k}}{2})$ are
\begin{equation}\label{eq:expansion_midpoint}
	y_{k+1} - y_{k} = h f(y_{k}) + \frac{h^{2}}{2} f'(y_{k})f(y_{k}) + \mathcal{O}(h^{3}).
\end{equation}
To work out higher order terms by direct methods is tedious and results in long, convoluted tables (see, for example, the work by \cite{Hu1956}).

In 1957, however, Merson~\cite{merson1957} rediscovered a remarkable structure, found already by Cayley~\cite{Ca1857} in 1857: a one-to-one correspondence between elementary differentials and \emph{rooted trees}.
This structure is the basis of the influential work by Butcher, who, in~1963, gave the first modern treatment of the order conditions for Runge--Kutta methods~\cite{butcher1963coefficients}, and, in~1974, developed an algebraic theory for series expansions of integration methods~\cite{butcher1972algebraic}.

Let $T$ denote the set of rooted trees.
The expansion of a Runge--Kutta method is of the form
\begin{equation}\label{eq:b_series_classical}
	y_{k+1} - y_{k} = \sum_{\tau \in T}h^{\abs{\tau}}\alpha(\tau)\eldfappl{\tau}{f}(y_{k}),
\end{equation}
where $\abs{\tau}$ denotes the number of vertices of~$\tau$, $\alpha\colon T\to\RR$ characterises the method, and $\eldfappl{\tau}{f}$ is the elementary differential of $f$ associated with~$\tau$ (see \autoref{sec:eldiff} for details).
The right-hand side of~\eqref{eq:b_series_classical} is called a \emph{Butcher series}, or \emph{B-series}, named so in 1974 by Hairer and Wanner~\cite{hairer1974butcher}.
The rich algebraic structure of B-series has since been studied extensively \cite{murua2006hopf,chartier2009algebraic,chartier2010algebraic,CaEFMa11,hairer2012conjugate}. 
A numerical integration method $\Phi(h,f)$ whose expansion in $h$ is of the form~\eqref{eq:b_series_classical} is called a \emph{B-series method}.
In addition to numerical contexts, B-series have arisen in other branches of mathematics, such as noncommutative geometry, in models of renormalization \cite{CoKr1999,brouder2000runge,Br2004} and rough paths theory \cite{Gu2010}.

Runge--Kutta methods are dense in the space of all B-series \cite[\S\,317]{Bu08}: given any series of the form~\eqref{eq:b_series_classical} and any $p\in\NN$, there exists a Runge--Kutta method whose B-series coincides up to order~$h^{p}$.
There are, however, methods $\Phi(h,f)$ other than Runge--Kutta whose expansions in $h$ are B-series.
Examples are Rosenbrock methods like $y_{n+1} = y_n + h (I-\frac{1}{2}h f'(y_n))^{-1}f(y_n)$ \cite{HaWa10}, exponential integrators like $y_{n+1} = y_n + h \varphi(h f'(y_n))f(y_n)$ where $\varphi(z) = (e^z-1)/z$, and the average vector field (AVF) method
$y_{n+1} = y_n + h \int_0^1 f(\xi y_{n+1} + (1-\xi)y_n)\, \dd \xi$ \cite{QuML08}.

So, which integration methods are B-series methods?
Of course, given some method $\Phi(h,f)$, one can always check~\eqref{eq:b_series_classical} by an expansion in~$h$.
But which properties \emph{characterise} B-series methods?
This is a natural, long-standing question that we answer here.
Our result is primarily based on two previous results: (i) that Runge--Kutta methods (and hence B-series methods) are \emph{equivariant} with respect to affine transformations~\cite{MLQu01}, and~(ii) that local, equivariant maps can be expanded in a type of series described by \emph{aromatic trees}~\cite{mk-verdier}.
Our result states that an integration method is a B-series method if and only if it defines an \emph{affine equivariant sequence}, meaning chiefly that it is equivariant and keeps decoupled systems decoupled.

Before going into the details of the result, we explain a few key points, fundamental throughout the paper.

\begin{itemize}
	\item
Many ODE integration methods (in particular B-series methods) fulfill 
\begin{equation}\label{eq:h_dep}
	\Phi(\varepsilon h,f) = \Phi(h,\varepsilon f)
\end{equation}
for any positive $\varepsilon$.
We may therefore disregard the dependency on $h$ and write $\Phi(h,f) = \Phi(h f)$.
\item
We regard an ODE integration method as a map $\Phi$ from a neighbourhood of zero of the smooth, compactly supported vector fields $\rvf{n}$ to the set of diffeomorphisms $\Diff[\RR^{n}]$ (see \autoref{def:integrator}).
To consider a neighbourhood of zero means to restrict the integration method to sufficiently small time steps.
Then $\Phi$ is an approximation of the \emph{exponential map} $\exp\colon\rvf{n}\to\Diff[\RR^{n}]$.
\item
We take the \emph{backward error analysis} point-of-view, which represents an integration method $\Phi(f)$ as $\Phi = \exp\circ\phi$ for some map $\phi \colon \rvf{n} \to \rvf{n}$. 
Here, equality holds in the sense of formal power series.
To avoid technical questions of convergence subordinate to our aim, the main result, \autoref{thm:main}, is formulated for maps from vector fields to vector fields, independently of their relation to numerical integration methods. 
The same argument is used in \cite[\S\,2.2]{mk-verdier}.

The key observation is, nevertheless, that $\Phi$ is a B-series method if and only if $\phi(f)$ can be expanded in a B-series, i.e.,
\begin{equation}\label{eq:b_series_bea}
	\phi(f) = \sum_{\tau\in T} \beta(\tau) \eldfappl{\tau}{f},
\end{equation}
for some map $\beta\colon T\to\RR$.
Each term in~\eqref{eq:b_series_bea} is a homogeneous polynomial in $f$, so each term corresponds to a symmetric, multi-linear map 
\begin{equation}\label{eq:bilinear_maps}
	\rvf{n}\times\cdots\times\rvf{n}\to \rvf{n}	
\end{equation}
evaluated at $f$.
For instance, the term $f' f$ in a B-series corresponds to the bilinear map $(f,g) \mapsto (f' g + g' f)/2$.
Consequently,~\eqref{eq:b_series_bea} is the Taylor series of $\phi\colon\rvf{n}\to\rvf{n}$, so our investigation consists of classifying those maps $\phi$ whose Taylor series are B-series.

\item
An ODE integration method actually corresponds to a \emph{sequence} of maps: one for each dimension $n\in\NN$.
From here on, we therefore use $\phi$ to denote a sequence of maps $\set{\phi_n}_{n\in\NN}$, where $\phi_n\colon\rvf{n}\to\rvf{n}$.
This  point of view is essential in the characterisation of B-series maps (see~\autoref{sec:main} for details).
\end{itemize}

The paper is organised as follows.
The main result is stated in \autoref{sec:main}.
In \autoref{sec:prelim} we give preliminary results necessary for the proof.
The main part of the proof is contained in \autoref{sec:mainproof}, and uses results from~\autoref{sec:special} on special vector field.
Finally, \autoref{sec:trans} connects the core result from \autoref{sec:mainproof} to the main result as stated in \autoref{sec:main}.

\section{Main result} \label{sec:main}

Our main result is a simple criterion to decide whether a method is a B-series method.
The essence of the result is captured as follows.

\begin{mdframed}[skipabove=10pt, skipbelow=10pt]
	 Let $\Phi = \{\Phi_n\}_{n\in\NN}$ be an integration method, defined for all vector fields on all dimensions $n$.
	Then $\Phi$ is a B-series method if and only if the property of
	\emph{affine equivariance} is fulfilled: if $a(x) \coloneqq A x + b$ is an affine map from $\RR^m$ to $\RR^n$, and $f\in\rvf{m}$, $g\in\rvf{n}$ fulfil $g(Ax + b) = A f(x)$, then $a \circ \Phi_m(f) = \Phi_n(g) \circ a$.
\end{mdframed}


The rigorous version of this result, using, as explained, the backward error analysis point of view, is stated in \autoref{thm:main} at the end of this section.
Before that,
we need to define the essential concept of equivariance.

%

Our definition of equivariance is an extended version of that in \cite[\S\,2.4]{mk-verdier}, \cite[\S\,4.3]{MLQu01}.
We first define the main object of study: sequences of smooth maps from a neighbourhood of zero in the space of compactly supported vector fields to itself.
As we reuse this object several times, we encapsulate it in the following definition. 
\begin{definition}[\Intmap]
	\label{def:integrator}
	Let $\rvf{n}$ be the space of compactly supported vector fields on $\RR^n$ with the test function topology.
	We define an \emph{\intmap} as a sequence of smooth maps
\begin{equation}\label{eq:family_maps}
	\phi = \set[\big]{\phi_n\colon U_n\subset\rvf{n}\rightarrow\rvf{n}}_{n\in \NN},
\end{equation}
where each $U_n$ is a suitable neighbourhood of the zero vector field in $\rvf{n}$.
\end{definition}
In order for an \intmap $\phi$ to correspond to a B-series, there must clearly be some relationship between the individual maps $\phi_n$.
We denote by $\aff{n}{m}$ the set of affine maps:
\begin{equation}
\aff {n}{m} = \setc[\big]{a\colon \RR^n\rightarrow \RR^m}{a(x) = Ax+b, \text{$A\in \RR^{m\times n}$, $x\in \RR^n$ and $b\in \RR^m$}}
.
\end{equation}
Pullback of vector fields along invertible diffeomorphisms is generalised for non-invertible maps by the concept of \emph{intertwining} (relatedness) of vector fields. 
We say that $a\in \aff{n}{m}$ \emph{intertwines} the vector fields $f\in \rvf{n}$ and $g\in \rvf{m}$, which we denote by  $f\affrel g$, if
\begin{align}
	\label{eq:defrelated}
	g(Ax+b) = Af(x)
	.
\end{align} 

\begin{definition}[Affine equivariance]\label{def:sae} 
	An \intmap~$\phi$ (\autoref{def:integrator}) is called \emph{affine equivariant} if, for all $m,n\in \NN$ and all $a\in \aff {m}{n}$,
\begin{align}
	f\affrel g \quad \implies \quad \phi_m(f)\affrel\phi_n(g) .
\end{align}
\end{definition}

Here are some of the properties attributed to affine equivariance:
\begin{enumerate}
\item
\Intmaps that are equivariant with respect to \emph{invertible} affine maps preserve affine symmetries.
At every fixed dimension, the integration method corresponding to such an \intmap is affine equivariant in the standard sense, used in \cite{mk-verdier}.
\item
Consider systems of the form
\begin{align}
\dot x &= f(x), \\
\dot y &= g(x,y). 
\end{align}
where $x\in\RR^m$, $y\in\RR^n$.
Geometrically, they are characterised by preservation of the foliation by planes $x=$ constant \cite{mclachlan2001kinds}.
\Intmaps $\phi$ (and corresponding integration methods) that are equivariant with respect to affine \emph{surjections}, in this case the surjection $(x,y)\mapsto x$, preserve this foliation.
In addition, they are ``closed with respect to closed subsystems'' \cite{bochev1994quadratic}: the map $\phi_{m+n}((f,g))$  restricted to $x$ in domain and range is identical to $\phi_m(f)$. 
\item 
\Intmaps (and corresponding integration methods) that are equivariant with respect to affine \emph{injections} preserve affine weak integrals.
(Recall that a weak integral is a function $I\colon \RR^n\to\RR$ such that $I=0$ implies $\dot I=0$; an affine weak integral is equivalent to an invariant affine subspace.)
In addition, the map on the affine subspace induced by such an \intmap is identical to that produced by application directly to the system on the subspace.
\end{enumerate}
Each of these are also properties of B-series methods, while non-B-series methods, such as partitioned Runge--Kutta methods, do not have them.
In addition, B-series methods have many other structural and geometric properties \cite{IsQuTs07,chartier2006algebraic}.
Perhaps surprisingly, all of these are now seen to be consequences of affine equivariance.

To give a rigorous definition of \intmaps corresponding to B-series methods, we need Taylor series of vector field maps.
Let $\phi_{n}\colon\rvf{n}\to\rvf{n}$ be smooth.
Its $k$:th derivative at $0\in\rvf{n}$, denoted $D^k\phi_{n}(0)$, is a $k$--linear, symmetric form on $\rvf{n}$.
Taylor's formula~\cite[Theorem~I.5.12]{KrMi97} states that
\begin{multline}\label{eq:taylor}
	\phi_n(f) = \phi_n(0) + \frac{D\phi_n(0)[f]}{1!} + \cdots + \frac{D^{k}\phi_n(0)[\overbrace{f,\ldots,f}^{k}]}{k!} \\
	+ \int_0^1 \frac{(1-\sigma)^{k}}{k!}D^{k+1}\phi(\sigma f)[\overbrace{f,\ldots,f}^{k+1}]\,\dd \sigma .
\end{multline}

Let, as before, $T$ denote the set of rooted trees and $\eldfappl{\tau}{f}\in\rvf{n}$ denote the elementary differential of $f\in\rvf{n}$ associated with $\tau\in T$.
Further, let $\Tvk$ denote the free $\RR$-vector space over the set $\Tree_k$ of trees with $k$ vertices.
That is, each element in $\Tvk$ is an $\RR$-linear combination of elements in $\Tree_k$.
By construction, $\Tree_k$ is a basis for~$\Tvk$.

For each $k\in\NN$ and $f\in\rvf{n}$, $\eldfappl{\cdot}{f}$ is naturally extended to a linear map $\Tvk\to \rvf{n}$.
We define \Bseriesmaps as those \intmap whose Taylor coefficients are elementary differentials.

\begin{definition}[\Bseriesmap]
	\label{def:bseriesintegrator}
	An \intmap $\phi$ (\autoref{def:integrator}) is called a \emph{\Bseriesmap} if, for each $k\in\NN$, there exists $\tau_k\in\Tvk$ such that for all $n\in\NN$
	\begin{align}
		\frac{D^{k}\phi_n(0)[f,\ldots,f]}{k!} = \eldfappl{\tau_k}{f}, \qquad \forall\, f\in\rvf{n}.
	\end{align}
\end{definition}

We are now in a position to state the main result of this paper.
\begin{theorem} \label{thm:main}
	Let $\phi$ be an \intmap (\autoref{def:integrator}).
	Then $\phi$ is a \Bseriesmap (\autoref{def:bseriesintegrator}) if and only if it is  affine equivariant (\autoref{def:sae}).
\end{theorem}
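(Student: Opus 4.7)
The ``B-series map $\Rightarrow$ affine equivariant'' direction is the direct one. For any rooted tree $\tau$ and any intertwined pair $f \affrel g$ via $a(x) = Ax + b$, a structural induction on $\tau$ using the chain rule gives $\eldfappl{\tau}{g}(Ax+b) = A\,\eldfappl{\tau}{f}(x)$. Since \autoref{def:bseriesintegrator} uses the same element $\tau_k \in \Tvk$ in every dimension, summing the Taylor expansion~\eqref{eq:taylor} coefficient by coefficient yields $\phi_n(g)(Ax+b) = A\,\phi_m(f)(x)$, i.e., $\phi_m(f) \affrel \phi_n(g)$.

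For the converse, my plan is to analyse each homogeneous Taylor coefficient separately. Affine equivariance descends from $\phi_n$ to each derivative $D^k\phi_n(0)$, so one obtains a sequence of smooth, symmetric, $k$-linear, affine equivariant maps $L_n \colon \rvf{n}^k \to \rvf{n}$. It then suffices to prove that $L_n(f,\ldots,f) = \eldfappl{\tau_k}{f}$ for a single $\tau_k \in \Tvk$ independent of $n$.

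At this stage I would invoke the main result of \cite{mk-verdier}: at a fixed dimension, smoothness together with equivariance under invertible affine maps forces $L_n$ on the diagonal to be a linear combination of elementary differentials of \emph{aromatic} trees with $k$ vertices. Aromatic trees generalise rooted trees by allowing closed cycles (\emph{aromas}), whose elementary differentials contribute traces of Jacobian products and depend explicitly on the ambient dimension. The rest of the proof must leverage the extra strength of affine equivariance--equivariance under non-invertible affine maps between different dimensions--to force every coefficient of an aromatic tree with a nontrivial aroma to vanish, and then to force the surviving rooted-tree coefficients to be independent of $n$.

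I expect this elimination of aromas to be the main obstacle, and it is presumably the role of the special vector fields constructed in \autoref{sec:special}. The idea is to test affine equivariance against carefully engineered $f \in \rvf{m}$ and $g \in \rvf{n}$ intertwined by an affine injection $a \in \aff{m}{n}$: a trace taken in $\RR^n$ does not match the corresponding trace in $\RR^m$ along such an embedding, so any nonvanishing aromatic contribution registers a dimension-dependent mismatch. The delicacy is that the elementary differential of an aromatic tree does not factor as aroma times bouquet at the level of multilinear forms, so a blunt scaling argument is insufficient; one needs a combinatorially rich family of test vector fields that separates the aromatic basis. Once the aromas are killed, a final cross-dimensional equivariance argument (using injections between different ambient dimensions) pins down the common rooted-tree coefficients and produces the required $\tau_k \in \Tvk$.
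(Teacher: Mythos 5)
Your skeleton coincides with the paper's own proof at every level. The forward direction via chain-rule induction on trees is the same computation the paper packages through the equivariance of the connection and the universality of the free pre-Lie algebra (\autoref{lem:connectequivar}, \autoref{prop:freeprelie}, \autoref{prop:bseriesequi}); the converse likewise proceeds by transfer to homogeneous Taylor coefficients (\autoref{pro:transfer}, \autoref{prop:transferpoint}), the fixed-dimension aromatic classification of \cite{mk-verdier} (\autoref{prop:surjapoly}), cross-dimensional unification of the coefficients (\autoref{prop:Asurjective}), and elimination of aromas by special vector fields (\autoref{prop:Bsurjective}).

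However, the step you yourself identify as the main obstacle---killing the aromas---is left unexecuted, and the mechanism you sketch for it points the wrong way. You propose to test equivariance against pairs intertwined by affine \emph{injections}, but the vector fields that actually detect aromas, namely $f = f_{\tau}\oplus\lambda_1 f_{\mu_1}\oplus\cdots\oplus\lambda_m f_{\mu_m}$ from \autoref{prop:monomial}, are not of the form $g\oplus 0$ and are not related to $f_{\tau}$ by any injection (the blocks $\lambda_i f_{\mu_i}$ do not vanish on the embedded subspace). What does the work is equivariance under affine \emph{surjections}: the block projection intertwines $f$ with $f_{\tau}$, so equivariance forces $\dappl{P}{f_{\tau}\oplus g} = \dappl{P}{f_{\tau}}\oplus\dappl{P}{g}$ (\autoref{prop:blockequi}), i.e.\ decoupled systems stay decoupled, whereas a molecule's elementary differential is \emph{additive} over blocks, $\mu[f_1\oplus f_2]=\mu[f_1]+\mu[f_2]$ (\autoref{le:parteldiff}). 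Writing $\gamma = p(\mu_1,\ldots,\mu_m)\tau+\gamma'$ and combining the duality relations of \autoref{le:eldual} with this additivity, the projected value of $\dappl{\gamma}{f}$ equals $p(\lambda_1^{\abs{\mu_1}},\ldots,\lambda_m^{\abs{\mu_m}})\,\partial_1$ on one hand and $p(0,\ldots,0)\,\partial_1$ on the other, forcing $p$ to be constant and hence $\gamma\in\spangle{\Tree_k}$. This construction is precisely the ``combinatorially rich family'' you postulate but do not supply; without it, and with the injection-based heuristic in its place, your argument for the converse is not complete.
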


\begin{proof}
	Using \autoref{pro:transfer} and \autoref{prop:transferpoint}, this result is equivalent to \autoref{prop:bijection}.
\end{proof}

\begin{remark}
	It turns out that, although B-series are affine equivariant, it is not necessary to use equivariance with respect to all affine maps to prove \autoref{thm:main}.
	We only use equivariance with respect to all \emph{surjective} affine maps to prove \autoref{prop:bijection} (invertible affine maps in \autoref{prop:surjapoly} and surjective affine maps in \autoref{prop:blockequi}) and with respect to the trivial affine injections from $\RR^0$ to $\RR^n$ (in \autoref{prop:equilocal}).
\end{remark}

\subsection{Idea of the proof}

The proof of \autoref{thm:main} is long and contains many details.
In this section we therefore explain its main ingredients by proving \autoref{thm:main} in the special case when the functions $\phi_m$ are \emph{homogeneous quadratic} maps, instead of arbitrary nonlinear smooth maps.

We use a transfer argument, similar to that of \cite{mk-verdier}, to transfer the statement from affine equivariant \intmaps, to linear equivariant polynomials $P_m$ in the derivatives of the vector fields (this is \autoref{pro:transfer} and \autoref{prop:transferpoint}).

After the transfer argument, we  have for every dimension $m$ a vector valued homogeneous quadratic polynomial $P_m$ in the derivatives of the vector field at zero (cf. \autoref{sec:forms}).
A component of such a vector could be for  $ f_2^3 f_1^1$  in dimension three, or a linear combination of such expressions (we follow standard practice for the notation of components and partial derivatives, which are detailed in \autoref{sec:prelim}).

We make explicit the assumption that the sequence $P_m$ is linear equivariant.
Consider an arbitrary linear map $A$ from $\RR^n$ to $\RR^m$.
We say that $A$ intertwines a vector field $f$ in dimension $n$ and a vector field $g$ in dimension $m$, if $g(A x) = A f(x)$ for any $x$ in $\RR^n$ (cf. \autoref{sec:linintertwining}).
The sequence $\set{P_m}_{m\in\NN}$ is equivariant if whenever $A$ intertwines $f$ and $g$, it also intertwines $\dappl{P_n}{f}$ and $\dappl{P_m}{g}$ (cf. \autoref{sec:equiseq}).

We now prove the special case of quadratic linear equivariant functions.

\begin{proposition}
	Suppose that $P = \set{P_m}_{m\in\NN}$ is a  vector-valued, homogeneous quadratic polynomial in the derivatives of vector fields at zero.
	Suppose further that for any linear map $A \colon \RR^n \to \RR^m$ and vector fields such that $g(Ax) = A f(x)$, we have $\dappl{P}{g}(Ax) = A \dappl{P}{f}(x)$.
	Then $P$ must be of the form
	\begin{equation}
	\dappl{P_m}{f} =  \lambda \sum_{i,j=1}^m f_i^j f^i \partial_j
	\end{equation}
	for some scalar $\lambda$.
\end{proposition}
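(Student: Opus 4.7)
The plan is to proceed in two stages: first use equivariance under the full $\GL(m)$ to bring $P_m$ into a very short list of candidates in each fixed dimension, then use equivariance under a linear projection to eliminate the superfluous candidate and tie the constants across dimensions.

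\smallskip

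\textbf{Step 1: $\GL$-equivariance reduces $P_m$ to two basis terms.} Working at fixed dimension $m$, think of $\dappl{P_m}{f}$ as a vector-valued homogeneous quadratic polynomial in the jet variables $\{f^a,\, f^a_b,\, f^a_{bc},\ldots\}$ at a point. Invertible $A \in \GL(m)$ act on the jet by the natural (co/contra-)variant tensor action, and the equivariance hypothesis says that $\dappl{P_m}{\cdot}$ is a $(1,0)$-tensor valued $\GL(m)$-equivariant polynomial in these variables. Classical invariant theory (or a direct check using diagonal and permutation matrices) then implies that each monomial must be a contraction of its two jet-factors by Kronecker deltas $\delta^i_j$. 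Index counting pins these down: for factors $\partial^{I}f$ and $\partial^{J}f$ there are $2$ upper and $|I|+|J|$ lower indices, so a $(1,0)$-output requires contracting away one upper and all the lowers, which forces $|I|+|J|=1$. The only two resulting contractions are $f^j_i f^i$ and $f^i_i f^j$. Hence
\begin{equation*}
  \dappl{P_m}{f} \;=\; \alpha_m \sum_{i,j} f^j_i f^i \,\partial_j \;+\; \beta_m \sum_{i,j} f^i_i f^j\, \partial_j
\end{equation*}
for scalars $\alpha_m,\beta_m$ (with only $\alpha_1+\beta_1$ meaningful in dimension one since the two terms agree there).

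\smallskip

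\textbf{Step 2: Equivariance under projection kills the divergence term and identifies the constants.} Apply the hypothesis to the canonical surjection $A\colon \RR^{m+1}\to\RR^m$ that drops the last coordinate. The intertwining condition $g(Ax)=Af(x)$ is equivalent to $f^i$ being independent of $x_{m+1}$ for $i\le m$ and $g^i=f^i$ there. Evaluating $(\dappl{P_{m+1}}{f})^i$ for $i\le m$ one finds that $(f'f)^i$ restricts to $(g'g)^i$, while $(\operatorname{div} f)\,f^i = (\operatorname{div} g + f^{m+1}_{m+1})\,g^i$ carries a spurious summand $\beta_{m+1} f^{m+1}_{m+1}\, g^i$. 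Matching against $\dappl{P_m}{g}$, and noting that $f^{m+1}_{m+1}$ is entirely unconstrained by the intertwining condition, forces $\beta_{m+1}=0$ and $\alpha_{m+1}=\alpha_m+\beta_m$ for every $m\ge 1$. Consequently $\beta_m=0$ for all $m\ge 2$, all the $\alpha_m$ with $m\ge 2$ coincide with a common value $\lambda$, and the dimension-one case is handled automatically since $\alpha_1+\beta_1=\alpha_2=\lambda$. This gives $\dappl{P_m}{f}=\lambda\sum_{i,j} f^j_i f^i \partial_j$ in every dimension, as required.

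\smallskip

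\textbf{Main obstacle.} The nontrivial ingredient is Step~1: justifying that $\GL$-equivariance of a tensor-valued polynomial in a jet forces it to be a Kronecker-delta contraction of its factors. In the quadratic setting this can be done by hand (diagonal matrices separate monomials by multiweight and permutation matrices give that only $\delta$-contractions survive), but it must be stated with care because the variables $f^a_{b_1\cdots b_k}$ are symmetric in the lower indices, and that symmetry has to be tracked. Once that point is settled, the index count and the projection computation in Step~2 are routine.
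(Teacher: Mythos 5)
Your proof is correct and follows essentially the same route as the paper: $\mathsf{GL}(m)$-equivariance at each fixed dimension reduces $P_m$ to a combination of $f'f$ and $(\operatorname{div} f)\,f$ (the paper cites this reduction as a known result rather than re-deriving it), and equivariance under coordinate projections then eliminates the divergence term and ties the constants across dimensions. The only cosmetic difference is that you accomplish both of the latter points in one stroke via the projection $\RR^{m+1}\to\RR^{m}$ with a generic last component, whereas the paper first matches the constants by zero-padding and then kills the divergence coefficient using the specific field $g(x_1,x_2)=(1,x_2)$ --- the underlying mechanism (the divergence term couples components and so cannot survive equivariance under surjections) is identical.
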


\newcommand*\combi[1]{\lambda_{#1} \ffprime + \mu_{#1} \looproot}
\NewDocumentCommand\Proj{O{m}O{2}}{\Pi^{#1}_{#2}}


We use the following standard notations (see \autoref{sec:eldiff} for the notations pertaining to elementary differentials used in this paper):
\begin{gather}
	\label{eq:eldifftwo}
	\begin{aligned}
		\eldappl{\ffprime}{f} &= \sum_{i,j=1}^n f_j^i f^j \partial_i
		,
		\\
		\eldappl{\looproot}{f} &= \sum_{i,j=1}^n f_i^i f^j \partial_j
		.
	\end{aligned}
\end{gather}
The main ingredient of the proof is to notice that the term $\looproot$ is \emph{coupling} (cf. \autoref{prop:blockequi} and \autoref{le:parteldiff}), which contradicts equivariance (cf. \autoref{le:tpr}).

\begin{proof}

	Consider an arbitrary vector field $f$ in dimension $k$ (in this proof, $k$ will take the value~$1$ or~$2$).
For any dimension $m \geq k$, we denote the vector $f \oplus 0$ as the vector equal to $f$ on the first $k$ components, padded with zeros (cf. \autoref{sec:partitionedvf}), i.e.,
\begin{equation}\paren{f\oplus 0} (x_1,\ldots,x_k,\ldots,x_m) \coloneqq (f^1(x_1,\ldots,x_k),\ldots, f^k(x_1,\ldots,x_k),0,\ldots,0)
	.
\end{equation}
For $m \geq k$, we also define the linear projection $\Proj[m][k] \colon \RR^m \to \RR^k$ which keeps the first $k$ components.
Note that the projection $\Proj[m][k]$ intertwines $f\oplus 0$ and $f$, since $\Proj[m][k] \paren{f\oplus 0} (x) = f(\Proj[m][k] x)$ (cf. \autoref{prop:defpartition}).
Finally, observe that from the formulas \eqref{eq:eldifftwo}, we have for any scalars $\lambda$ and $\mu$ (cf. \autoref{prop:eldiffcompat})
\begin{equation}
	\label{eq:equizero}
	\Proj[m][k]\dappl{\paren{\combi{}}}{f \oplus 0} = {\dappl{\paren{\combi{}}}{f}} 
.
\end{equation}
Given these definitions and observations, the proof proceeds as follows.

	\begin{enumerate}
		\item
Using equivariance with respect to \emph{invertible} linear maps on each dimension,  we obtain from \cite{mk-verdier} (see \autoref{prop:surjapoly}) that, for every dimension $m$, we have
\begin{equation}
	\dappl{P_m}{f} = \eldappl{\paren{\combi{m}}}{f}
\end{equation}
for some real numbers $\lambda_m$ and $\mu_m$.

\item
We now proceed to show that $\dappl{P_m}{f} = \dappl{\paren{\combi{2}}}{f}$ for any vector field $f$ in $m$ dimensions (cf. \autoref{prop:Asurjective}).
We first show that for $m \geq 2$ we have $\lambda_m = \lambda_2$ and $\mu_m = \mu_2$, and then show that, on one-dimensional vector fields, $\dappl{P_1}{f} = \dappl{\paren{\combi{2}}}{f}$.
	\begin{enumerate}
\item

Consider a vector field $f$ in dimension two.
As $\Proj$ intertwines $f\oplus 0$ and $f$, equivariance of $P$ leads to $\Proj \dappl{P_m}{f \oplus 0}(x) =  \dappl{P_2}{f}(\Proj x) = \eldappl{\paren{\combi{2}}}{f}(\Proj x)$.
Combining this with \eqref{eq:equizero}, we obtain that  $\combi{m}$ and $\combi{2}$ coincide on vector fields in dimension two.
Appealing again to \cite{mk-verdier} (see \autoref{prop:surjapoly}), we obtain that $\lambda_m = \lambda_2$ and $\mu_m = \mu_2$.

\item
	Consider now a vector field $f$ in  one dimension.
	Construct $f\oplus 0$ in two dimensions; equivariance now gives $\Proj[2][1] \dappl{P_2}{f\oplus 0}(x) = \dappl{P_1}{f}(\Proj[2][1] x)$.
Again, by comparing this with \eqref{eq:equizero}, we obtain that $\dappl{P_1}{f} = \dappl{\paren{\combi{2}}}{f}$.

\end{enumerate}
	Combining the last two items, we deduce that in fact $\dappl{P_m}{f} = \eldappl{\paren{\combi{2}}}{f}$ for any integer $m$.

\item
The final step is now to show that $\mu_2 = 0$ (cf. \autoref{prop:Bsurjective}).
The idea is to apply $P_2$ to the special vector field
$g(x_1,x_2) = (1,x_2)$ (cf. \autoref{prop:monomial} for the general expression of such special vector fields).
One checks that, at $x = 0$, we have $\eldappl{{\ffprime}}{g}(0) = 0$, and that $\eldappl{{\looproot}}{g}(0) = (1,0)$.
We conclude that \(\dappl{P_2}{f}(0) = \mu_2 (1,0)\), and in particular $\Proj[2][1]{\dappl{P_2}{f}(0)} = \mu_2$.
Denote by $1$ the constant vector field on $\RR$; it is clear that $\Proj[2][1]$ intertwines $g$ and $1$.
The equivariance of $P$ now implies that $\Proj[2][1]\dappl{P_2}{g}(0) = \dappl{P_1}{1}(0) =  0$, which entails $\mu_2 = 0$.
\end{enumerate}
We conclude that for any integer $m$, $P_m = \lambda_2 \ffprime$, which, recalling the notation \eqref{eq:eldifftwo}, is the claim of the proposition.
\end{proof}

The rest of the paper consists of generalizing the arguments in the proof above, in order to accommodate  homogeneous polynomials of any degree.

\section{Preliminary definitions}\label{sec:prelim}

\subsection{Polynomial vector fields}

For a fixed dimension $n$, we define an infinite dimensional vector space $\psf$ of \demph{polynomials} of arbitrary degree.
We use derivatives as coordinates in that space.
These coordinates are thus indexed by the partial derivatives, as
\begin{align}
(\fv, \fv_1,\ldots,\fv_n,\fv_{11},\fv_{12},\dots)
\end{align}
with appropriate symmetry conditions, such as $\fv_{12} = \fv_{21}$.

We denote by $\pvf[n]$ the set of \demph{vector-valued polynomials}, which consists of $n$ elements of $\psf$, that is $\pvf[n] = (\psf)^n$.
An element in $\pvf[n]$ should be regarded as a \demph{polynomial vector field}.

\subsection{Forms}
\label{sec:forms}

A \demph{$k$-form} in dimension $n$ is a homogeneous polynomial of degree $k$ on the space of polynomial vector fields $\pvf[n]$.
We denote scalar $k$-forms on $\pvf$ by $\sym^k(\pvf)$.
A \demph{vector valued $k$-form} in dimension $n$ is a list of $n$ $k$-forms, regarded as a vector.
It is thus an element of $\RR^n \otimes \sym^k(\pvf)$.
As is customary, we use the basis~$\partial_i$.
Note that we use the same notation for the basis in all dimensions, which should not cause confusion.
For instance, in dimension $n$ the map $\eta$ defined by
\begin{align}\dappl{\eta}{\fv}
		= \fv^1 \fv^1_1 \partial_1 + \fv^n_1 \fv^n_n  \partial_n 
\end{align} 
is a vector-valued $2$-form.
In one dimension, the coordinates are $(f^1, f^1_1, f^1_{11},\dots)$, corresponding to $(f,f',f'',\ldots)$, and a $2$-form in one dimension could be $f f' + (f'')^2$. 
In two dimensions, an example of a vector valued $3$-form is $\dappl{P}{f} = f_{112}^2 f^2 f^1 \partial_1 + (f_2^1)^3 \partial_2$.

\subsection{Intertwining}
\label{sec:linintertwining}

Given a linear map $A \in \mathcal{L}(\RR^n, \RR^m)$ we say that $A$ \demph{intertwines} $f\in \pvf[n]$ with $g\in \pvf[m]$, denoted \begin{align}f \affrel[A] g
,
\end{align} if the equality
\begin{align}
	\label{eq:deflinintertwine}
	{g}(A x) = A {f}(x)
\end{align}
is valid for all $x\in\RR^{n}$.

We give an example of intertwining with respect to a projection. 
Define the scalar polynomials $f_1 \in \psf[1]$ and $f_2 \in \psf[2]$,
 the polynomial vector field $f\in\pvf[2]$ by $f(x_1,x_2) \coloneqq f_1(x_1) \partial_1 + f_2(x_1,x_2) \partial_2$,
 and $g\in\pvf[1]$ by $g(x) = f_1(x) \partial_1$.
We denote the projection $\pi \in \Lin(\RR^2,\RR^1)$ on the first coordinate, that is, $\pi(x_1,x_2) = x_1$.
In that case, one can check that $g\paren[\big]{\pi(x)}= \pi \paren[\big]{f (x)}$, so we have $f \affrel[\pi] g$.

We now give an example of intertwining with respect to an injection. 
Define the vector field $f\in\pvf[1]$ by $f = f_1(x_1) \partial_1$, where $f_1 \in \psf[1]$.
Consider now the vector field $g\in\pvf[2]$ defined by $g(x_1,x_2) = g_1(x_1,x_2) \partial_1 + g_2(x_1,x_2)\partial_2$, with the property that $g_2(x,0) = 0$ and $g_1(x,0) = f_1(x)$ for any $x\in\RR$.
Define the injection $i \in\Lin(\RR,\RR^2)$ by $i(x) = (x,0)$.
As $g\paren[\big]{i(x)} = i\paren[\big]{f(x)}$, we have $f\affrel[i] g$.

\subsection{Equivariant sequences}
\label{sec:equiseq}

We define an \demph{equivariant sequence of (vector-valued) $k$-forms} as a sequence $\eta_n$ of vector-valued $k$-forms with the property
\begin{align}
	\label{eq:deflinequi}
f \affrel[A] g,
\end{align}
it holds that
\begin{align}
	\label{eq:deflinequi2}
\eta_n(f) \affrel[A] \eta_m(g).
\end{align}
A typical example of an equivariant sequence of $2$-forms is
\begin{align}
	\eta_n[f] \coloneqq  \sum_{i,k=1}^n \fv_i^k f^i \partial_k.
\end{align}

Finally, we use the following simplifying notation.
If $\fv\in\pvf[n]$ and $P$ is a sequence of $k$-forms, we use the notation
\begin{align}
	\dappl{P}{\fv} \coloneqq \dappl{P_n}{\fv}
	.
\end{align}
If the dimension $n$ is not clear from the context, we use the explicit form $\dappl{P_n}{\fv}$.


\subsection{Aromatic forests, trees and molecules}

We now review some definitions from \cite{mk-verdier}. 
Let $\Graph$ denote the set of all directed graphs with a finite number of vertices, where each vertex has zero or one outgoing edges. 
A vertex with no outgoing edges is called a \demph{root}. 
For $\gamma\in\Graph$, let $\Vertices(\gamma)$ and $\Edges(\gamma)$ denote the vertices and edges of the graph, let $\Roots(\gamma)\subset \Vertices(\gamma)$ denote the \demph{root vertices}. 
For $v\in \Vertices(\gamma)$, let $\Parents(v)\subset \Vertices(\gamma)$ denote the set of \demph{parent vertices}, $\Parents(v) := \setc{p\in \Vertices(\gamma)}{(p,v)\in \Edges(\gamma)}$. 
Let $\abs{\gamma} \coloneqq \#{\Vertices(\gamma)}$ denote the number of vertices and $\abs{\Roots(\gamma)}$ the number of roots.

We have $\Graph = \bigcup_{r=0,k=1}^\infty\Graph[r][k]$, where $\Graph[r][k]$ denotes graphs with $r$ roots and $k$ vertices.
We denote $\Graph[r]:=\bigcup_{k=1}^\infty \Graph[r][k]$.
Let $\Gv$ and $\Gv[r]$ denote the free $\RR$-vector spaces over $\Graph$ and $\Graph[r]$.
An element of  $\Graph[1]$ is called an \demph{aromatic tree}. 
For instance, the following is an element of $\Graph[1]$, as it has one root, so it is an aromatic tree:
\newcommand*\exampleatree{
		\placeroots{3}
		\children[1]{child{node{}} child{node{}}}
		\children[3]{child{node{}}}
		\jointrees{2}{3}
}
\begin{equation}
	\begin{tikzpicture}[etree]
		\exampleatree
\end{tikzpicture}.
\end{equation}
By convention, we will assume that the cycles are always oriented counterclockwise and we will draw the aromatic trees in short form as:
\begin{align}
	\begin{tikzpicture}[setree]
		\exampleatree
	\end{tikzpicture}.
\end{align}

The set of \demph{trees} is the subset $T\subset \Graph[1]$ of connected graphs in $\Graph[1]$. 
Similarly, the set of \demph{aromatic molecules} is the subset $M\subset \Graph[0]$ of connected graphs in $\Graph[0]$.

We define the \demph{product of graphs} as their disjoint union: for $\gamma_1,\gamma_2\in \Graph$ the product $\gamma_1\gamma_2=\gamma_2\gamma_1$ is the graph consisting of the union of the vertices and edges of the two graphs.

\begin{lemma}\label{lem:product_decomposition}
	Let $\gamma\in \Graph[1]$.
	Then $\gamma$ can be decomposed as
	\begin{equation}
		\gamma = \mu_1\mu_2\cdots \mu_k\tau,
	\end{equation}
	where $\mu_1,\ldots,\mu_k\in M$ and $\tau\in T$.
\end{lemma}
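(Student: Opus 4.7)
The plan is to argue by connected-component decomposition: the statement is essentially a bookkeeping fact, since the product in the paper is disjoint union, so any graph is tautologically the product of its connected components. The content of the lemma is therefore that exactly one component is a tree and the others are aromatic molecules.

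Concretely, I would start from the hypothesis $\gamma \in \Graph[1]$, which means $\gamma$ has exactly one root. Consider the partition of $\Vertices(\gamma)$ into connected components of the underlying undirected graph, and let $\tau$ denote the subgraph on the component containing the unique root, while $\mu_1, \ldots, \mu_k$ denote the subgraphs on the remaining components. Since no edge of $\gamma$ crosses between components, the disjoint union of these subgraphs recovers $\gamma$, i.e., $\gamma = \mu_1 \mu_2 \cdots \mu_k \tau$ in the product notation of the paper.

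It then remains to identify the type of each factor. The root property of a vertex $v$ — having no outgoing edge — is intrinsic to $v$ and is inherited from $\gamma$ to whichever component contains $v$. Hence $\tau$ has exactly one root (the root of $\gamma$), and each $\mu_i$ has zero roots (all other vertices of $\gamma$ have an outgoing edge, which stays inside their component). Together with connectedness by construction, this gives $\tau \in T$ and $\mu_i \in M$ for each $i$, as required.

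There is no real obstacle: the whole argument is a direct unpacking of the definitions of $\Graph[r]$, $T$, $M$, and the graph product. The only point deserving a line of justification is that ``outgoing-edge-free'' is a local condition on a vertex, so the counts of roots in $\tau$ and the $\mu_i$ add up to $\abs{\Roots(\gamma)} = 1$; this forces the asymmetric distribution (one to $\tau$, zero to each $\mu_i$) that the statement asserts.
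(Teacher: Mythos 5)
Your argument is correct and coincides with the paper's own proof: both decompose $\gamma$ into connected components, identify the component containing the unique root as the tree $\tau$, and observe that the remaining components have no roots and are therefore aromatic molecules. Nothing further is needed.
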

\begin{proof}
Any graph $\gamma\in \Graph[1]$ can be decomposed  into a union of its connected components, where each connected component is either in $T$ or in $\Tc$. 
As $\gamma$ has one root, the root must belong to one of the components, which is thus in $\Tree$.	
The other components are also aromatic forests, but with the same number of nodes and arrows, so they cannot have any root and must belong to $\Tc$.
\end{proof}

\subsection{Elementary differentials}
\label{sec:eldiff}

Consider $f\in \pvf[n]$ and $\gamma \in \Graph[r][k]$.
The set $\Vertices(\gamma)$ denotes the set of vertices of $\gamma$.
For a node $i \in \Vertices(\gamma)$, we denote by $\Parents(i)$ the parent vertices of $i$.
We define the \demph{elementary differential} $\eldfappl{\gamma}{f}[n]\in (\RR^n)^{\otimes r}\otimes\sym^k(\pvf[n])$ in tensor component notation as
\begin{equation}
	\label{eq:eldiff2}
\eldfappl{\gamma}{\fv}[n] \coloneqq \prod_{i\in \Vertices(\gamma)} f^i_{\Parents(i)} 
,
\end{equation}
where we use the \emph{Einstein summation convention}: repeated indexes are summed over in the range $\set{1\ldots n}$. 
Every lower index is paired with an upper index, but the upper indices, corresponding to roots, are not paired. 
We rewrite that definition in a more tractable form in \eqref{eq:eldiff2b}.
Note that we make sure to keep track of the \emph{dimension} $n$ in those expressions.
Consider, for instance, the expression
\begin{align}
\eldiff_n\paren[\big]{\looproot}[f] =
 \sum_{i=1}^n f^i\partial_i \sum_{j=1}^{n} f^j_j .
\end{align}
In one dimension it is
\begin{align}
	\eldiff_1\paren[\big]{\looproot}[f] = f^1 f^1_1 \partial_1 \equiv f f' .
\end{align}
In two dimensions it is
\begin{align}
	\eldiff_2\paren[\big]{\looproot}[f] = \paren{f^1 \partial_1 + f^2 \partial_2} (f_1^1 + f^2_2) = f \operatorname{div}f.
\end{align}

Below is an example of an elementary differential where $\gamma\in \Tc$ and $\eldfappl{\gamma}{\fv}[n]$ is a scalar two-form:
\begin{align}
	\eldfappl{%
\begin{tikzpicture}[setree]
	\placeroots{1}
	\children[1]{child{node{}}}
	\jointrees{1}{1}
\end{tikzpicture}%
}{f}[n] = f^i f^j_{ij} = \sum_{i=1}^n\sum_{j=1}^n f^i\frac{\partial^2 f^j}{\partial x_i\partial x_j}  = f\cdot \operatorname{grad}(\operatorname{div} f) .
\end{align}
Here, $i$ is the the top vertex of $\gamma$, giving the part $f^i$ (no parents, hence no subscript), and $j$ is the bottom vertex of $\gamma$, giving the factor $f^j_{ij}$, since $\Parents(j) = \{i,j\}$.



We now rewrite the definition~(\refeq{eq:eldiff2}) of the elementary differential $\eldfappl{\gamma}{\cdot}[n]\colon \pvf[n]\rightarrow\pvf[n]$ in an equivalent, but, for our purpose,  more tractable form:
\begin{equation}
	\label{eq:eldiff2b}
\eldfappl{\gamma}{\fv}[n] = \sum_{\nu\colon \Vertices(\gamma)\rightarrow[n]}\prod_{v\in \Vertices(\gamma)} f^{\nu(v)}_{\nu\paren{\Parents(v)}}\prod_{r\in \Roots(\gamma)}\partial_{\nu(r)},
\end{equation}
Here, $[n] \coloneqq \set{1,2,\ldots,n}$, $\Roots(\gamma)$ denotes the root vertices of the graph $\gamma$, $\partial_j\in \RR^{n}$ denotes the unit vector in a direction $j\in [n]$, and $\Parents(v)$ denotes the set of parent vertices of vertex $v$, i.e.,
$\Parents(v) = \setc{p\in \Vertices(\gamma)}{(p,v)\in \Edges(\gamma)} $.
The sum runs over all possible maps $\nu\colon \Vertices(\gamma)\rightarrow [n]$, assigning vertices in $\gamma$ to integers in $[n]$.

To verify the re-writing~\eqref{eq:eldiff2b}, we note that an assignment of vertices in $\gamma$ to 
integers in $[n]$ is already implicit in our interpretation of~(\refeq{eq:eldiff2}) where $i$ both denotes a node in $\gamma$ and an integer in $[n]$. Thus $f^i_J$ in~(\refeq{eq:eldiff2}) is the same as $f^{\nu(v)}_{\nu\paren{\Parents(v)}}$
in~(\refeq{eq:eldiff2b}). In~(\refeq{eq:eldiff2}), the root nodes are left as tensor components which are not summed over. In~(\refeq{eq:eldiff2b}) we pair the root nodes $\nu(r)$ with
the basis vector field $\partial_{\nu(r)}$, and hence the sum here runs over all possible maps $\nu\colon \Vertices(\gamma)\rightarrow[n]$.


For each dimension $n$, equation~\eqref{eq:eldiff2b} defines the \demph{elementary differential map} $\eldiff_n$.
In particular, on the subspace $\spangle{\Graph[1][k]}$ of aromatic trees with $k$ vertices, we have
\begin{align}
	\eldiff_n \colon \spangle{\Graph[1][k]} \to \RR^n \otimes \sym^k(\pvf[n])
	.
\end{align}

For fixed dimension $n$, the map $\eldiff_n$ is not injective.
For instance,
\begin{align}
	\eldfappl%
{
	\begin{tikzpicture}[setree,]
		\placeroots{3}
		\jointrees{2}{2}
		\jointrees{3}{3}
	\end{tikzpicture}
	+ 2\ 
	\begin{tikzpicture}[setree,]
		\placeroots{1}
		\children[1]{child{node{} child{node{}}}}
	\end{tikzpicture}
	-2\ 
	\begin{tikzpicture}[setree,]
		\placeroots{2}
		\children[1]{child{node{} }}
		\jointrees{2}{2}
	\end{tikzpicture}
	-
	\begin{tikzpicture}[setree,]
		\placeroots{3}
		\jointrees{2}{3}
	\end{tikzpicture}
}{f}[2]
	= 0 
	\qquad\text{for all $f\in\pvf[2]$.}
\end{align}
The elementary differential map is not injective even when restricted to $\Tv$.
For instance,
\begin{align}
\eldfappl%
{\begin{tikzpicture}[setree,]
		\placeroots{1}
		\children[1]{child{node{}} child{node{}child{node{}}}}
	\end{tikzpicture} -
	  \ {\begin{tikzpicture}[setree,]
		\placeroots{1}
		\children[1]{child{node{}child{node{}}child{node{}}}}
	\end{tikzpicture}}}{f}[1]=0
	\qquad \text{for all $f\in\pvf[1]$.}
\end{align}
This is one of the motivations for regarding the elementary differential as acting on all dimensions.
Indeed, we build a \emph{sequence}
\begin{align}
	\eldiff \colon \spangle{\Graph[1][k]} \to \set{\text{sequences of vector-valued $k$-forms}}
\end{align}
defined by
\begin{align}
	\eldiff(\gamma) \coloneqq \paren[\big]{\eldiff_1(\gamma), \eldiff_2(\gamma),\ldots}.
\end{align}

In the sequel, we use the following simplified notation:
for $\gamma \in \Graph$ and $\fv\in\pvf[n]$, we define
\begin{align}
	\eldappl{\gamma}{\fv}[n] \coloneqq \eldfappl{\gamma}{f}[n] .
\end{align}
Note that the dimension $n$ is implicitly defined by the space $\pvf[n]$ that $\fv$ belongs to.
When ambiguity remains, we resume the explicit notation $\eldfappl{\gamma}{f}[n]$.

As a first result for the elementary differential map, consider the following result, established in the paper \cite[\S\,7.4]{mk-verdier}:

\begin{proposition}
	\label{prop:surjapoly}
	For each dimension $n\in\NN$ and degree $k\in\NN$, 
$\eldiff_n$ is a surjection from $\spangle{\Graph[1][k]}$ to the space $\RR^n \otimes \sym^k(\pvf[n])$ of vector-valued equivariant $k$-forms in dimension $n$.
Moreover, if $k \leq n$, then $\eldiff_n$ is a bijection.
\end{proposition}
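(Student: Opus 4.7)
The plan is to invoke the First Fundamental Theorem of invariant theory for $\GL[n]$, which classifies the polynomial maps between tensor representations that commute with the $\GL[n]$-action. First I would unpack the definition of a vector-valued $k$-form on $\pvf[n]$: by homogeneity in $f$ and symmetry of mixed partial derivatives, such a form is a polynomial of degree $k$ in the variables $f^i_J$ (where the multi-index $J$ is unordered and encodes a partial derivative), with values in $\RR^n$. The $\GL[n]$-equivariance condition from \autoref{sec:equiseq} applied to invertible linear $A\colon\RR^n\to\RR^n$ turns this into the requirement that the form is a $\GL[n]$-intertwiner between suitable tensor products of copies of $\RR^n$ and $(\RR^n)^*$ and a single copy of $\RR^n$ on the output side.

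Next I would translate the First Fundamental Theorem into graphical form. Each of the $k$ copies of $f$ in a monomial becomes a vertex; it carries one upper (contravariant) index — the vector part — and one lower (covariant) index for each derivative taken on it. Invariant tensor expressions are built from contractions of an upper with a lower index, which I represent as a directed edge from the covariant vertex toward the contravariant vertex. Because the output must still carry one free upper index (the form is $\RR^n$-valued), exactly one vertex must be free of outgoing edges, i.e.\ the graph has exactly one root and therefore belongs to $\Graph[1][k]$. Summing over the assignments of free indices reproduces exactly the Einstein sum in \eqref{eq:eldiff2b}. This gives surjectivity of $\eldiff_n$ onto the equivariant part of $\RR^n\otimes\sym^k(\pvf[n])$.

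For the bijectivity claim when $k\le n$, I would appeal to the "no relations" half of the First Fundamental Theorem: when the rank $n$ is large enough compared to the number of tensor factors, distinct contraction patterns yield linearly independent elements. Since the graphs in $\Graph[1][k]$ have $k$ vertices whose labelling maps $\nu$ take values in $[n]$, the condition $k\le n$ allows enough "room" to separate different patterns by specialising $\nu$ to injective assignments. The examples at the end of the excerpt show this bound is sharp: the identity in dimension $2$ appears at $k=3$, and the identity in dimension $1$ appears at $k=4$.

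The main obstacle is the careful bookkeeping in the graphical translation: one must verify that the symmetrisation in the lower index multi-set $\Parents(v)$ is exactly what is forced by the symmetry $f_{ij}=f_{ji}$, so that the basis of contraction patterns modulo this symmetry is in clean bijection with the abstract combinatorial objects $\Graph[1][k]$ (isomorphism classes of one-rooted directed graphs on $k$ vertices with out-degree at most one). Once this combinatorial bookkeeping is lined up with the statement of the First Fundamental Theorem, both the surjectivity and the $k\le n$ injectivity follow directly from the corresponding halves of that theorem.
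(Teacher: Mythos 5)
The paper does not prove this proposition itself but imports it from \cite[\S\,7.4]{mk-verdier}, and your route --- identifying $\GL[n]$-equivariant vector-valued $k$-forms with invariant tensors, applying the first fundamental theorem of invariant theory to realise them as index contractions encoded by one-rooted directed graphs with out-degree at most one, and using the absence of relations among contraction patterns for $k\le n$ (the second fundamental theorem) for bijectivity --- is exactly the argument given there. The only nit is your edge orientation (the paper directs edges from the vertex whose contravariant index is contracted, i.e.\ the parent, toward the vertex carrying the covariant derivative slot, so that ``no outgoing edge'' means ``free upper index''), but this is a convention issue and does not affect the substance.
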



\section{Special vector fields}\label{sec:special}

The proof of \autoref{prop:bijection} below is based on the construction of special vector fields.
In particular, we need vector fields with block-diagonal Jacobians.
We also need special vector fields that form a dual basis with respect to aromatic trees and molecules.

\subsection{Partitioned vector fields}
\label{sec:partitionedvf}

Vector fields with block-diagonal Jacobian matrices are called \demph{partitioned}.
They serve a special role in the sequel. 

\begin{proposition}
	\label{prop:defpartition}
	Consider the decomposition
	\begin{align}
		\RR^{n+m} = \RR^n \oplus \RR^m,
	\end{align}
	and denote the associated projections by $\pi_1$ and $\pi_2$.
	Then, given $\fv \in \pvf[n]$ and $\fv[g] \in \pvf[m]$, there is a unique vector field $\fv[h] \in \pvf[n+m]$ characterized by
	\begin{align}
		\fv[h] \affrel[\pi_1] \fv[f]
		,
		\qquad
		\fv[h] \affrel[\pi_2] \fv[g]
		.
	\end{align}
	That vector field is denoted
	\begin{align}
		\fv \oplus \fv[g] \coloneqq \fv[h]
	.
\end{align}
\end{proposition}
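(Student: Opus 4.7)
The plan is to show that the two intertwining conditions, when unfolded, fully determine $h$ component-wise at every point, so existence and uniqueness both reduce to a direct verification.

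First, I would unpack the meaning of each intertwining relation in the partitioned setting. Writing a point of $\RR^{n+m}$ as $(x,y)$ with $x\in\RR^n$, $y\in\RR^m$, the condition $h\affrel[\pi_1] f$ is, by \eqref{eq:deflinintertwine}, the identity $f(\pi_1(x,y)) = \pi_1(h(x,y))$, which reads $f(x) = \pi_1(h(x,y))$ for every $(x,y)$. Symmetrically, $h\affrel[\pi_2] g$ reads $g(y) = \pi_2(h(x,y))$. Since $\pi_1$ and $\pi_2$ together recover every element of $\RR^{n+m} = \RR^n\oplus\RR^m$, these two identities are equivalent to the single pointwise formula
\begin{equation}
h(x,y) = \bigl(f(x),\, g(y)\bigr).
\end{equation}

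For uniqueness, the display above already forces $h$ to be any particular vector field: if $h$ satisfies both intertwining relations, the formula above is the only possible value at each point. For existence, I would define $h$ by this formula and verify in one line that $\pi_1\circ h = f\circ\pi_1$ and $\pi_2\circ h = g\circ\pi_2$. It remains to check that $h$ really belongs to $\pvf[n+m]$; but since $f$ and $g$ are polynomial in their respective coordinates, each component of $h$ is a polynomial in $(x,y)$, so $h\in\pvf[n+m]$.

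There is no real obstacle here: the statement is essentially a reformulation of the fact that a vector field on a direct sum whose Jacobian is block-diagonal with respect to the decomposition is the same thing as a pair of vector fields on the summands, and the two intertwining conditions are a coordinate-free way of expressing precisely this block structure.
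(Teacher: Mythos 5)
Your proof is correct and follows essentially the same route as the paper's: both unfold the two intertwining conditions $\pi_1 h(x,y) = f(x)$ and $\pi_2 h(x,y) = g(y)$ to conclude that $h(x,y) = (f(x), g(y))$ is forced, which gives uniqueness and existence at once. Your version is slightly more explicit (checking polynomiality and the converse verification), but the substance is identical.
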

\begin{proof}
$\fv [h]\affrel[\pi_1] \fv	$ means by definition \eqref{eq:deflinintertwine} that $\pi_1 \fv[h](x,y) = \fv(x)$, where we denote a point $(x,y) \in \RR^{n+m}$ so that $\pi_1(x,y) = x$.
This means that $\fv[h](x,y)$ has $f(x)$ as first components, and likewise, $g(y)$ as last components.
\end{proof}

We thus immediately obtain the following property of equivariant sequences.

\begin{lemma}
	\label{prop:blockequi}
	If 
	$\fv[f]\in \pvf[n]$ and $\fv[g] \in \pvf[m]$, and $P$ is an equivariant sequence of $k$-forms, we have
	\begin{align}
		\dappl{P}{\fv[f \oplus g]} = \dappl{P}{\fv[f]} \oplus \dappl{P}{\fv[g]}
		.
	\end{align}
\end{lemma}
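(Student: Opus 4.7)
The plan is to apply the characterization from \autoref{prop:defpartition} directly, using equivariance of $P$ to transfer the intertwining relations that define $f \oplus g$ to intertwining relations satisfied by $\dappl{P}{f \oplus g}$.

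First, I would recall that by \autoref{prop:defpartition}, the vector field $f \oplus g \in \pvf[n+m]$ is uniquely characterized by the two intertwining relations
\begin{align}
	f \oplus g \affrel[\pi_1] f, \qquad f \oplus g \affrel[\pi_2] g,
\end{align}
where $\pi_1 \colon \RR^{n+m}\to\RR^n$ and $\pi_2 \colon \RR^{n+m}\to\RR^m$ are the two canonical projections.

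Next, since $P = \set{P_\ell}_{\ell\in\NN}$ is an equivariant sequence of $k$-forms, the definition of equivariance (equations~\eqref{eq:deflinequi}--\eqref{eq:deflinequi2}) applied to the two intertwining relations above yields
\begin{align}
	\dappl{P}{f\oplus g} \affrel[\pi_1] \dappl{P}{f}, \qquad \dappl{P}{f\oplus g} \affrel[\pi_2] \dappl{P}{g}.
\end{align}
By the uniqueness part of \autoref{prop:defpartition}, these two relations characterize a single vector field in $\pvf[n+m]$, namely $\dappl{P}{f} \oplus \dappl{P}{g}$. Therefore $\dappl{P}{f\oplus g} = \dappl{P}{f}\oplus\dappl{P}{g}$, as claimed.

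There is no real obstacle here: the lemma is essentially a bookkeeping corollary of the universal characterization given in \autoref{prop:defpartition} combined with the defining property of equivariance. The only thing one must be careful about is that the notation $\dappl{P}{\,\cdot\,}$ hides the dimension (following the convention set in \autoref{sec:equiseq}), so on the left-hand side $\dappl{P}{f\oplus g}$ really means $\dappl{P_{n+m}}{f\oplus g}$, while on the right-hand side the two summands refer to $\dappl{P_n}{f}$ and $\dappl{P_m}{g}$; keeping these dimensions straight is what makes the application of equivariance well-defined.
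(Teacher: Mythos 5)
Your proposal is correct and follows exactly the paper's own argument: apply equivariance to the two intertwining relations $f\oplus g \affrel[\pi_i] f$ (resp.\ $g$) from \autoref{prop:defpartition}, then invoke the uniqueness part of that proposition to identify $\dappl{P}{f\oplus g}$ with $\dappl{P}{f}\oplus\dappl{P}{g}$. Your closing remark about tracking the hidden dimensions in the notation $\dappl{P}{\,\cdot\,}$ is a sensible clarification, though the paper leaves it implicit.
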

\begin{proof}
	Consider the vector field $h = f \oplus g$, defined as in \autoref{prop:defpartition}.
	By definition of the equivariance of the sequence \eqref{eq:deflinequi}, we have $\dappl{P}{h} \affrel[\pi_1] \dappl{P}{f}$, and $\dappl{P}{h} \affrel[\pi_2] \dappl{P}{g}$, which, again by \autoref{prop:defpartition} ensures that $\dappl{P}{h} = \dappl{P}{f} \oplus \dappl{P}{g}$.
\end{proof}

Thus, equivariant sequences ``keep decoupled systems decoupled'';
this is essentially the difference between aromatic series and B-series.

We now derive special formulas for elementary differentials of partitioned vector fields.
To do that, we first reformulate the elementary differential formula \eqref{eq:eldiff2b} using \emph{dependency graphs}.

\begin{definition}
	\label{def:depgraph}
	The \emph{dependency graph} of $f\in\pvf[n]$, denoted $\dgr(f)$, is the directed, labeled graph defined by
	\begin{subequations}
	\begin{align}
	\Vertices(\dgr(f)) &= [n]\\
	(j,i) \notin \Edges(\dgr(f)) &\iff \partial_{j} f^i(x) = 0 \quad\forall \, x\in\RR^{n}
	.
	\end{align}
\end{subequations}
\end{definition}
Note that a vector field is partitioned if and only if its dependency graph is disconnected.

\begin{lemma}\label{le:eldiff}For $\mu\in M$ the elementary differential is given as
\begin{equation}\label{eq:eldiff3}
\mu[f] = \sum_{\nu\in \hom(\mu,\dgr(f))} \prod_{v\in \Vertices(\mu)}f^{\nu(v)}_{\nu(\Parents(v))}, 
\end{equation}
where $\hom(\mu,\dgr(f))$ denotes graph homomorphisms of $\mu$ into $\dgr(f)$, i.e.\ a map of graphs sending vertices to vertices and edges to edges.
For $\tau\in T$, component $k$ of the elementary differential is given as
\begin{equation}\label{eq:eldiff4}
\tau[f]^k= \sum_{\stackrel{\nu\in \hom(\tau,\dgr(f))}{\nu(\Roots(\tau))=k}}  \prod_{v\in \Vertices(\mu)}f^{\nu(v)}_{\nu(\Parents(v))}.
\end{equation}
\end{lemma}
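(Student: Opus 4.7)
The plan is to derive \eqref{eq:eldiff3} and \eqref{eq:eldiff4} directly from \eqref{eq:eldiff2b} by identifying which maps $\nu\colon \Vertices(\gamma)\to [n]$ can contribute a nonzero summand. The key observation is that each summand contains, at every $v\in\Vertices(\gamma)$, the factor $f^{\nu(v)}_{\nu(\Parents(v))}$, which is a mixed partial derivative of $f^{\nu(v)}$ with respect to the variables $x_{\nu(p)}$ for $p\in\Parents(v)$. By \autoref{def:depgraph}, if there exists an edge $(p,v)\in\Edges(\gamma)$ whose image pair $(\nu(p),\nu(v))$ is \emph{not} an edge of $\dgr(f)$, then $\partial_{\nu(p)} f^{\nu(v)}\equiv 0$ as a polynomial; since partial derivatives commute, the whole mixed derivative $f^{\nu(v)}_{\nu(\Parents(v))}$ is identically zero, and the summand vanishes.

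Hence the sum in \eqref{eq:eldiff2b} may be restricted to those $\nu$ that send every edge of $\gamma$ to an edge of $\dgr(f)$, i.e., to graph homomorphisms $\gamma \to \dgr(f)$. For $\mu\in M$ there are no roots, so \eqref{eq:eldiff2b} carries no leftover $\partial_{\nu(r)}$ factors, and we immediately obtain \eqref{eq:eldiff3}. For $\tau\in T$, the tree has a unique root $r$, and each surviving summand carries a single basis vector $\partial_{\nu(r)}$; extracting component $k$ amounts to restricting the sum further to those homomorphisms satisfying $\nu(r)=k$, which yields \eqref{eq:eldiff4}.

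The only subtlety worth spelling out is that the \emph{entire} mixed derivative vanishes as soon as a single first-order factor $\partial_{\nu(p)} f^{\nu(v)}$ does: if $\partial_{\nu(p)} f^{\nu(v)}\equiv 0$ as a polynomial, then any further derivative of it is identically zero as well, so $f^{\nu(v)}_{\nu(\Parents(v))}\equiv 0$. Once this point is made, the remainder is pure bookkeeping between the index sets $\Vertices(\gamma)$, $\Edges(\gamma)$, $\Roots(\gamma)$ and $[n]$, and I expect no real obstacle beyond carefully separating the molecule case (no roots, scalar output) from the tree case (one root, component-wise identification).
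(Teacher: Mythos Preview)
Your proposal is correct and follows essentially the same approach as the paper: start from \eqref{eq:eldiff2b}, observe via \autoref{def:depgraph} that any $\nu$ not sending edges to edges contributes a zero summand, and then read off the two cases according to whether $\gamma$ has no root or one root. Your extra remark that the vanishing of a single first-order partial forces the full mixed derivative to vanish is a welcome clarification of a step the paper leaves implicit.
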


\begin{proof}
Equation~(\refeq{eq:eldiff2b}) expresses the elementary differential as a sum over all possible maps $\nu\colon \Vertices(\gamma)\rightarrow \Vertices\paren[\big]{\dgr(f)}$. 
By definition of the dependency graph \autoref{def:depgraph}, we see that all maps which are not sending edges to edges must yield 0. 
Hence, we can restrict the sum to homomorphisms. Both formulas follow from this argument; in the
first case $\mu$ has no roots. In the latter case component $k$ is the multiplier in front of $\partial_{\nu\paren[\big]{\Roots(\tau)}}$, i.e., $k=\nu\paren[\big]{\Roots(\tau)}$. Thus we restrict
to all homomorphisms sending the root of $\tau$ to $k$.
 \end{proof}


Our next result shows that trees and molecules preserve, in a sense, the structure of partitioned vector fields.

\begin{lemma}\label{le:parteldiff}
Consider the partitioned vector field $f = f_1\oplus f_2 \in \pvf[m+n]$, with $f_1\in\pvf[m]$ and $f_2\in\pvf[n]$. 
If $\mu\in M$ and $\tau\in T$, then
\begin{align}
\mu[f_1\oplus f_2]&= \mu[f_1]+\mu[f_2] \label{eq:part1}\\
\tau[f_1\oplus f_2] &= \tau[f_1]\oplus\tau[f_2]. \label{eq:part2}
\end{align}
\end{lemma}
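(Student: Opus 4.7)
The plan is to exploit the key structural fact that, since $f = f_1 \oplus f_2$ is a partitioned vector field, its dependency graph decomposes as a disjoint union:
\begin{equation*}
\dgr(f_1\oplus f_2) = \dgr(f_1)\sqcup\dgr(f_2),
\end{equation*}
where the vertex set of $\dgr(f_2)$ is identified with $\{m+1,\ldots,m+n\}$. This follows immediately from \autoref{prop:defpartition} and the fact that $\partial_j f^i = 0$ whenever $i$ and $j$ lie in different blocks. I would state this as a preliminary observation.

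Next, I would invoke the combinatorial principle that any graph homomorphism from a \emph{connected} graph into a disjoint union of graphs factors through exactly one component. Since every molecule $\mu\in M$ and every tree $\tau\in T$ is connected by definition, this yields a bijection
\begin{equation*}
\hom(\gamma,\dgr(f_1)\sqcup\dgr(f_2)) \;\cong\; \hom(\gamma,\dgr(f_1))\;\sqcup\;\hom(\gamma,\dgr(f_2))
\end{equation*}
for $\gamma\in M\cup T$. I would then plug this into the formulas \eqref{eq:eldiff3} and \eqref{eq:eldiff4} from \autoref{le:eldiff}. For the molecule case \eqref{eq:part1}, the sum over $\hom(\mu,\dgr(f_1\oplus f_2))$ splits as a sum over the two pieces, giving exactly $\mu[f_1]+\mu[f_2]$ since the factors $f^{\nu(v)}_{\nu(\Parents(v))}$ only involve entries of either $f_1$ or $f_2$ in each respective summand.

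For the tree case \eqref{eq:part2}, I would use the description of $\tau[f]\in\pvf[m+n]$ by components via \eqref{eq:eldiff4}. Fix a component index $k\in[m+n]$. The sum runs over $\nu\in\hom(\tau,\dgr(f_1\oplus f_2))$ with $\nu(\Roots(\tau))=k$; by connectedness, the whole tree is mapped into the block containing $k$. Thus if $k\le m$, only homomorphisms into $\dgr(f_1)$ contribute, yielding the $k$-th component of $\tau[f_1]$; if $k>m$, only homomorphisms into $\dgr(f_2)$ contribute, yielding the $(k-m)$-th component of $\tau[f_2]$. Assembling components gives $\tau[f_1]\oplus\tau[f_2]$ in the sense of \autoref{prop:defpartition}.

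The main obstacle is essentially bookkeeping: one must verify carefully that the ``connected implies factors through one component'' principle applies in this labeled, directed setting, and that the identification of vertex labels $\{m+1,\ldots,m+n\}$ with the second block matches the convention used in $\oplus$. Once these identifications are made transparent, both identities reduce to a clean reindexing of the sum in \autoref{le:eldiff}; no new combinatorial input is needed.
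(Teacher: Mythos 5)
Your proposal is correct and follows exactly the paper's argument: decompose $\dgr(f_1\oplus f_2)$ as a disjoint union, use connectedness of $\mu$ and $\tau$ to split $\hom(\gamma,\dgr(f))$ over the two components, and reindex the sums in \autoref{le:eldiff}. The only difference is that you spell out the component-wise bookkeeping for the tree case, which the paper leaves implicit.
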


\begin{proof}The dependencies $\delta = \dgr{f}$ decomposes  $\delta=\delta_1\delta_2$ in two disjoint graphs $\delta_1 = \dgr(f_1)$, $\delta_2=\dgr(f_2)$. Since $\mu$ is connected,
 $\hom(\mu,\delta) = \hom(\mu, \delta_1)\cup\hom(\mu, \delta_2)$ and the sum in~\eqref{eq:eldiff3}  splits accordingly, yielding~\eqref{eq:part1}. Similarly $\hom(\tau, \delta) = \hom(\tau, \delta_1)\cup\hom(\tau,\delta_2)$ yields~\eqref{eq:part2}.
\end{proof}


As opposed to trees and molecules, aromatic trees (which by \autoref{lem:product_decomposition} are products of molecules and a tree) do not preserve the structure of partitioned vector fields. 
This is the key to the characterisation of B-series. 
In the special case, however, when the partition represents an injection of a vector field in a higher dimensional space, aromatic trees do preserve the partitioned structure.
\begin{lemma}
	\label{prop:eldiffcompat}
	If $f = g\oplus 0$ with $f \in \pvf[n]$ and $g \in \pvf[m]$, then
	\begin{align}
		\eldappl{\gamma}{g \oplus 0}[n] = \eldappl{\gamma}{g}[m] \oplus 0
		\qquad \forall\,\gamma\in\spangle{\Graph[1][k]}
		.
	\end{align}
\end{lemma}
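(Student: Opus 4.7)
The plan is to reduce the statement, by linearity, to the case of a single aromatic tree $\gamma \in \Graph[1][k]$, and then exploit the decomposition $\gamma = \mu_1 \mu_2 \cdots \mu_\ell \tau$ from \autoref{lem:product_decomposition}, combined with the partition lemma \autoref{le:parteldiff} applied with the second factor equal to the zero vector field.

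The key observation I would establish first is that the elementary differential turns disjoint unions of graphs into products. Looking at formula~\eqref{eq:eldiff2b}, if $\gamma = \gamma_1 \gamma_2$ is a disjoint union, then $\Vertices(\gamma)$, $\Edges(\gamma)$ and $\Roots(\gamma)$ all split as disjoint unions, and so the sum over $\nu \colon \Vertices(\gamma) \to [n]$ factors as an independent sum over each piece, giving the multiplicative identity $\eldappl{\gamma_1 \gamma_2}{f}[n] = \eldappl{\gamma_1}{f}[n] \cdot \eldappl{\gamma_2}{f}[n]$, understood as scalar-times-scalar or scalar-times-vector depending on how many roots each factor carries.

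With this in hand, write $\gamma = \mu_1 \cdots \mu_\ell \tau$ with $\mu_i \in M$ and $\tau \in T$. Applying \autoref{le:parteldiff} to the vector field $g \oplus 0 \in \pvf[n]$ (viewed as the partitioned vector field with second component the zero vector field in dimension $n-m$) yields, for each molecule, $\mu_i[g \oplus 0] = \mu_i[g] + \mu_i[0] = \mu_i[g]$, since every molecule has at least one vertex and $\mu_i[0] = 0$. For the tree factor we get $\tau[g \oplus 0] = \tau[g] \oplus \tau[0] = \tau[g] \oplus 0$. Multiplying through via the product rule established above,
\begin{equation*}
\eldappl{\gamma}{g \oplus 0}[n] = \Bigl( \prod_{i=1}^\ell \mu_i[g] \Bigr) \cdot \bigl( \tau[g] \oplus 0 \bigr) = \Bigl( \prod_{i=1}^\ell \mu_i[g] \cdot \tau[g] \Bigr) \oplus 0 = \eldappl{\gamma}{g}[m] \oplus 0,
\end{equation*}
where in the last step we used the same product formula backwards in dimension $m$. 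Extending by linearity to all of $\spangle{\Graph[1][k]}$ completes the proof.

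The main technical point, and the only place where care is needed, is the multiplicative behaviour of $\eldiff_n$ on disjoint unions of graphs, since \autoref{le:parteldiff} is stated only for individual molecules and trees, not for their products; once this is extracted from the definition~\eqref{eq:eldiff2b}, the rest is bookkeeping. Everything else follows mechanically from \autoref{lem:product_decomposition} and \autoref{le:parteldiff}.
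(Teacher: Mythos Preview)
Your argument is correct, but it takes a considerably longer route than the paper's own proof. The paper argues directly from the defining formula~\eqref{eq:eldiff2b}: in the sum over all maps $\nu\colon \Vertices(\gamma)\to[n]$, any term for which some $\nu(v)$ lands in $[n]\setminus[m]$ vanishes, because the corresponding factor $f^{\nu(v)}_{J}$ is identically zero (the last $n-m$ components of $f=g\oplus 0$ are zero). The surviving terms are exactly those with $\nu\paren[\big]{\Vertices(\gamma)}\subset[m]$, which reproduces $\eldappl{\gamma}{g}[m]$ in the first $m$ components and gives zero in the rest. That is the entire proof.

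By contrast, you invoke the connected decomposition of~\autoref{lem:product_decomposition}, establish multiplicativity of $\eldiff_n$ on disjoint unions (essentially anticipating \autoref{le:tpr}, which in the paper appears only later), and then feed each connected factor through \autoref{le:parteldiff} with second summand the zero vector field. This is a perfectly valid structural argument and has the merit of exercising the machinery of the paper; it also makes transparent that the result is a consequence of the behaviour of \emph{connected} graphs on partitioned vector fields. The paper's direct approach, however, bypasses all of this: it needs neither the decomposition into molecules and a tree nor \autoref{le:parteldiff}, and it works uniformly for any $\gamma\in\Graph$ without first reducing to connected pieces.
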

\begin{proof}
The result follows from the elementary differential formula \eqref{eq:eldiff2b}.
We have to prove that the term corresponding to $\nu \colon \Vertices(\gamma) \to [n]$ is zero whenever $\nu\paren[\big]{\Vertices(\gamma)}\not\subset [m]$.
This is clear since there is then a vertex $v$ of $\gamma$ such that $\nu(v) \in [n] \setminus [m]$, and we use that $f^{\nu(v)}_J = 0$ for any derivative $J$.
\end{proof}

\subsection{Dual vector fields}

In classical B-series theory, results on linear independence of elementary differentials are obtained by specially constructed vector fields. 
We use the same technique as in~\cite{butcher1972algebraic,IsQuTs07} to construct such vector fields.

	 First, we need to define the symmetry of a graph.
	 Let $\sigma(\gamma)$ denotes the number of symmetries of a graph, defined as the size of the automorphism group of the graph, i.e, 
	\begin{align}
	\sigma(\gamma) \coloneqq \#{\operatorname{Aut}(\gamma)}
\end{align}
where
\begin{align}
		     \operatorname{Aut}(\gamma) \coloneqq \setc[\big]{\nu\in\operatorname{Aut} \paren[\big]{\Vertices(\gamma)}}{\nu\paren[\big]{\Edges(\gamma)} = \Edges(\gamma)}.	\end{align}

We now define the \demph{labeling} of graph $\gamma \in \Graph$ as a bijection $\lambda \colon \bracket[\big]{\abs{\gamma}} \to \Vertices(\gamma)$.
By convention, we number the roots first.
In particular, \emph{for trees, the root will have number one}.
Incidentally, a similar labeling is chosen in the proof of \cite[Theorem~7.3]{mk-verdier}.

In the rest of this section we choose one fixed labelling for all aromatic forests.
Identifying $\Vertices(\gamma) \equiv [\abs{\gamma}]$ using this labeling, we define, for $\delta\in \Graph[r][n]$, the polynomial vector field $\fv_\delta\in \pvf[n]$ by 
\begin{equation}
	\label{eq:defdualvec}
\fv_\delta^j(x) \coloneqq \frac{1}{\paren[\big]{\sigma(\delta)}^{1/\abs{\delta}}} \prod_{i\in \Parents(j)}x_i ,
\end{equation}
where an empty product is defined as 1. 
By construction, $\delta = \dgr(f_\delta)$.

As an example, consider the following labeled aromatic molecule
\begin{align}
\delta = \begin{tikzpicture}[baseline=(tree1.south)]
	\begin{scope}[etree, scale=1.5]
	\placeroots{2}
	\children[1]{child{node(up){}}}
	\jointrees{1}{2}
\end{scope}
\node[above] at (up) {3};
\node[above right] at (tree1) {2};
\node[above] at (tree2) {1};
\end{tikzpicture}
\end{align}
Then
\begin{align}
\fv_\delta(x) = \begin{pmatrix}x_2\\x_1x_3\\1\end{pmatrix}.
\end{align}

\begin{lemma} If $\delta = \delta_1\mul\delta_2$ then
\begin{align}
	\fv_\delta = \fv_{\delta_1}\oplus\fv_{\delta_2} 
	.
\end{align}
\end{lemma}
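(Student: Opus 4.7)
My plan is to invoke \autoref{prop:defpartition}, which characterises $\fv_{\delta_1}\oplus\fv_{\delta_2}$ uniquely as the vector field $\fv[h]\in\pvf[|\delta|]$ satisfying $\fv[h]\affrel[\pi_1]\fv_{\delta_1}$ and $\fv[h]\affrel[\pi_2]\fv_{\delta_2}$, where $\pi_1,\pi_2$ are the canonical projections corresponding to the decomposition $\RR^{|\delta|}=\RR^{|\delta_1|}\oplus\RR^{|\delta_2|}$. So it is enough to verify these two intertwining relations with $\fv[h]=\fv_\delta$.

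I would fix the labelling on $\delta$ so that the vertices of $\delta_1$ are indexed by $1,\ldots,|\delta_1|$ and those of $\delta_2$ by $|\delta_1|+1,\ldots,|\delta|$, which is compatible with the convention of numbering the roots first. The central structural point is that, because $\delta$ is the \emph{disjoint} union of $\delta_1$ and $\delta_2$, no edge of $\delta$ crosses the two blocks; hence for every vertex $j$ of $\delta$ the set $\Parents_\delta(j)$ lies entirely inside the block containing $j$ and coincides with $\Parents_{\delta_i}(j)$ for the appropriate $i$. Plugging this into \eqref{eq:defdualvec}, the monomial $\prod_{i\in \Parents(j)}x_i$ in $\fv_\delta^j(x)$ involves only the coordinates of that block, so $\pi_i\fv_\delta(x)$ depends only on $\pi_i x$ and the two intertwining relations reduce to componentwise polynomial identities.

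What remains is a scalar check: on each block the component of $\fv_\delta$ differs from the corresponding component of $\fv_{\delta_i}$ only by the ratio of normalisation constants $\sigma(\delta)^{-1/|\delta|}/\sigma(\delta_i)^{-1/|\delta_i|}$, and this ratio must equal $1$. I expect this to be the main (and only genuinely combinatorial) obstacle. It amounts to the identity
\[
\sigma(\delta_1\delta_2)^{1/|\delta_1\delta_2|}=\sigma(\delta_1)^{1/|\delta_1|}=\sigma(\delta_2)^{1/|\delta_2|},
\]
which I would tackle by describing $\operatorname{Aut}(\delta_1\delta_2)$ as an extension of $\operatorname{Aut}(\delta_1)\times\operatorname{Aut}(\delta_2)$ by the permutations of isomorphic connected components, and then comparing exponents after taking the $|\delta|$-th root; the fact that $|\delta|$ appears in the exponent is precisely what is needed so that this factor is absorbed consistently across both blocks.

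Once the scalar identity is established, combining it with the structural observation of the second paragraph yields $\pi_i\fv_\delta=\fv_{\delta_i}\circ\pi_i$ for $i=1,2$, i.e.\ the two intertwining conditions, and the uniqueness part of \autoref{prop:defpartition} finishes the proof.
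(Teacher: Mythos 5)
Your structural reduction is sound and is essentially what the paper's one-line proof (``clear from the definition'') leaves implicit: since $\delta_1\mul\delta_2$ is a disjoint union, $\Parents_\delta(j)$ stays inside the block of $j$, so the monomials $\prod_{i\in\Parents(j)}x_i$ of $\fv_\delta$ restrict blockwise to those of $\fv_{\delta_1}$ and $\fv_{\delta_2}$, and the uniqueness in \autoref{prop:defpartition} finishes the argument. The genuine gap is the scalar identity you defer to the end: $\sigma(\delta_1\mul\delta_2)^{1/\abs{\delta_1\mul\delta_2}}=\sigma(\delta_1)^{1/\abs{\delta_1}}=\sigma(\delta_2)^{1/\abs{\delta_2}}$ is \emph{false}, so no description of $\operatorname{Aut}(\delta_1\mul\delta_2)$ can establish it. Take $\delta_1$ to be the single-vertex tree and $\delta_2$ the two-vertex cycle (the molecule with vertices $a,b$ and edges $(a,b)$ and $(b,a)$): then $\sigma(\delta_1)=1$, $\sigma(\delta_2)=2$, $\sigma(\delta_1\mul\delta_2)=2$, and the three quantities $1$, $2^{1/2}$, $2^{1/3}$ are pairwise distinct. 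Your own plan already shows why it must fail: when no connected component of $\delta_1$ is isomorphic to one of $\delta_2$, one has $\operatorname{Aut}(\delta_1\mul\delta_2)\cong\operatorname{Aut}(\delta_1)\times\operatorname{Aut}(\delta_2)$, hence $\sigma(\delta_1\mul\delta_2)=\sigma(\delta_1)\,\sigma(\delta_2)$, and $(\sigma_1\sigma_2)^{1/(k_1+k_2)}$ is a weighted geometric mean of $\sigma_1^{1/k_1}$ and $\sigma_2^{1/k_2}$, which coincides with both only when they already agree.

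What your attempt correctly exposes is that the lemma, read literally with the normalisation of \eqref{eq:defdualvec}, does not hold; it holds on the nose only for the unnormalised fields with components $\prod_{i\in\Parents(j)}x_i$, which is evidently what the authors' ``clear from the definition'' refers to. The discrepancy is harmless downstream, because \autoref{prop:monomial} builds its test field directly as a direct sum $\fdual{\tau}\oplus\lambda_1\fdual{\mu_1}\oplus\cdots\oplus\lambda_m\fdual{\mu_m}$ of normalised \emph{connected} duals and never applies this lemma to a product graph. To repair your proof you should either drop the normalisation (prove the statement for the unnormalised monomial fields) or restate the conclusion as $\fv_\delta=c_1\fv_{\delta_1}\oplus c_2\fv_{\delta_2}$ with $c_i=\sigma(\delta_i)^{1/\abs{\delta_i}}/\sigma(\delta)^{1/\abs{\delta}}$; as written, the step asserting that the ratio of normalisation constants ``must equal $1$'' cannot be completed.
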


\begin{proof} Clear from the definition of $\fv_\delta$.
\end{proof}

\begin{lemma}\label{le:eldual}
	Let $\mu,\mu'\in M\subset \Graph[0]$ be aromatic molecules and $\tau,\tau'\in T\subset\Graph[1]$ trees. 
	The  elementary differentials of $\mu$ and $\tau$ applied to $f_{\mu'}$ and $f_{\tau'}$  are given by
	\begin{subequations}
	\begin{align}
		\mu[f_{\tau'}] &= 0\label{eq:eld1}\\
		\mu[f_{\mu'}] &= \begin{cases} 1 & \text{if $\mu=\mu'$}\\0 &\text{otherwise} \end{cases}\label{eq:eld2}\\
		\tau[f_{\mu'}] &= 0\label{eq:eld3}\\
		\tau[f_{\tau'}]^1 &= \begin{cases} 1 & \text{if $\tau=\tau'$}\\ 0 &\text{otherwise} \end{cases}\label{eq:eld4}
	\end{align}
\end{subequations}
\end{lemma}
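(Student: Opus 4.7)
The plan is to leverage the graph-homomorphism reformulation of the elementary differential from \autoref{le:eldiff}, together with the identity $\dgr(f_\delta)=\delta$ which follows directly from the definition \eqref{eq:defdualvec}. Each component $f_\delta^j(x)$ is the squarefree monomial $c\prod_{i\in\Parents(j)}x_i$ with $c=\sigma(\delta)^{-1/\abs{\delta}}$, so each partial derivative $(f_\delta)^j_S$ is itself a monomial in the $x_i$, and at $x=0$ it equals $c$ precisely when the multi-index $S$ lists the elements of $\Parents(j)$ without repetition, and vanishes otherwise. Therefore the contribution of a homomorphism $\nu\in\hom(\gamma,\delta)$ to $\eldappl{\gamma}{f_\delta}$ at the origin is nonzero if and only if $\nu$ is \emph{parent-bijective}, meaning that for every $v\in\Vertices(\gamma)$ the restriction $\Parents_\gamma(v)\to\Parents_\delta(\nu(v))$ is a bijection; in that case the value is $c^{\abs{\gamma}}$.

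With this reduction, \eqref{eq:eld1} is immediate: molecules contain a directed cycle (chase the unique out-edge from any vertex in a finite graph whose vertices all have out-degree one), while trees are acyclic, so $\hom(\mu,\tau')=\emptyset$. For \eqref{eq:eld3}, I would argue that no parent-bijective homomorphism $\nu\colon\tau\to\mu'$ exists by a degree-counting argument: summing $\abs{\Parents_\tau(v)}=\abs{\Parents_{\mu'}(\nu(v))}$ over $v$ gives $\abs{\Edges(\tau)}=\abs{\tau}-1$ on one side and $\sum_w n_w\abs{\Parents_{\mu'}(w)}$ on the other, with $n_w=\abs{\nu^{-1}(w)}$; combined with $\sum_w n_w=\abs{\tau}$ and the molecule identity $\sum_w\abs{\Parents_{\mu'}(w)}=\abs{\mu'}$, this tightly constrains $\nu$ in a way that conflicts with the cycle structure of the connected functional graph $\mu'$. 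The labelling convention (root numbered $1$, so that $\tau[f]^1$ picks out homomorphisms sending the root to vertex $1$) identifies which component of the resulting vector field to inspect.

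For \eqref{eq:eld2} and \eqref{eq:eld4}, the parent-bijective condition forces $\nu$ to be a graph isomorphism. In the tree case the component constraint $\nu(\Roots(\tau))=1=\Roots(\tau')$ anchors a downward induction: the bijection between parents propagates the structure from the root to the leaves. In the molecule case one walks around the unique cycle of the functional graph and uses parent-bijectivity at each cycle vertex, then extends outward along the tree parts hanging off the cycle. When $\mu=\mu'$ (respectively $\tau=\tau'$), the parent-bijective $\nu$'s are exactly the automorphisms, each contributing $c^{\abs{\mu}}=\sigma(\mu)^{-1}$ via the normalization in \eqref{eq:defdualvec}; summing over the $\sigma(\mu)=\#\operatorname{Aut}(\mu)$ automorphisms gives $1$. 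Otherwise the set is empty and the sum is $0$.

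The main obstacle is the molecule-to-molecule argument: unlike the tree case, where the root constraint rigidifies the map, a parent-bijective homomorphism $\mu\to\mu'$ between distinct molecules (for example a covering of a $2$-cycle onto a single loop) is a priori possible. Excluding these requires careful use of both the cycle rigidity of connected functional graphs and the fixed labelling $\Vertices(\gamma)\equiv[\abs{\gamma}]$ chosen at the start of the section, which breaks the covering symmetry — this is where the bulk of the effort lies.
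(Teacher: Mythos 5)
Your reduction is correct and in fact sharper than the computation in the paper's own proof: the right local condition is exactly that $\nu$ restricted to $\Parents_\gamma(v)$ be a bijection onto $\Parents_{\gamma'}(\nu(v))$ for every $v$, each such homomorphism contributing $\sigma(\gamma')^{-\abs{\gamma}/\abs{\gamma'}}$ at the origin. Your handling of \eqref{eq:eld1} (molecules contain a cycle, trees do not) and of the tree--tree case \eqref{eq:eld4} (the root constraint rigidifies a parent-bijective $\nu$ into an isomorphism by induction away from the root) is sound and agrees with the paper.

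The difficulty you flag in the molecule cases is, however, not a technicality to be overcome but a genuine obstruction: the covering homomorphisms you worry about cannot be excluded, because they really do contribute. The sum in \eqref{eq:eldiff3} runs over all homomorphisms of the underlying unlabelled graphs and is completely insensitive to the labelling chosen before \eqref{eq:defdualvec}; the labelling only decides which coordinate of $\RR^{\abs{\delta}}$ each vertex of $\delta$ names. Concretely, let $\mu$ be the two-cycle, so $\mu[f]=\sum_{i,j}f^i_jf^j_i$, and let $\mu'$ be the one-vertex loop, so $f_{\mu'}(x)=x\,\partial_1$ on $\RR^1$: the double cover $\mu\to\mu'$ is parent-bijective and gives $\mu[f_{\mu'}]=(f^1_1)^2=1\neq 0$ although $\mu\neq\mu'$, contradicting \eqref{eq:eld2}. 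Likewise your edge count for \eqref{eq:eld3} does not close, and cannot: if $\mu'$ has a parentless vertex (say the loop with one extra vertex feeding into it, so $f_{\mu'}=(x_1x_2,\,1)$), the one-vertex tree already gives $\tau[f_{\mu'}](0)=(0,1)\neq 0$. Note that the paper's own proof passes over exactly this point by asserting that any $\nu$ sending two edges of $\gamma$ to one edge of $\gamma'$ contributes zero --- which is valid only when those two edges share their target vertex (so that the same squarefree monomial is differentiated twice), not for the coverings above. So your proposal is incomplete precisely where you say the bulk of the effort lies, and the route you suggest (breaking the covering symmetry via the labelling) cannot succeed; closing this requires weakening or reformulating \eqref{eq:eld2} and \eqref{eq:eld3} (e.g.\ restricting to the pairs actually needed in \autoref{prop:monomial}, or modifying the dual vector fields), not a more careful case analysis of homomorphisms.
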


\begin{proof}
Consider two connected graphs $\gamma,\gamma'\in \Graph$.
Define by $f$ the product in \eqref{eq:defdualvec} for the graph $\gamma'$, i.e, $f \coloneqq {\paren[\big]{\sigma(\delta)}^{1/\abs{\delta}}} f_{\gamma'}$.
Let $\Parents$ and $\Parents'$ denote the parent functions in the graphs $\gamma$ and $\gamma'$. From~(\refeq{eq:eldiff2b}) we find 
\begin{align}\gamma[\fv] &= \sum_{\nu\in\hom(\gamma,\gamma')}\prod_{v\in \Vertices(\gamma)} \frac{\partial^{|\Parents(v)|}}{\partial x_{\nu(\Parents(v))}}(\fv)^{\nu(v)}\prod_{r\in \Roots(\gamma)}\partial_{\nu(r)}\\
 &= \sum_{\nu}\prod_{v\in \Vertices(\gamma)} \frac{\partial^{|\Parents(v)|}}{\partial x_{\nu(\Parents(v))}}\prod_{i\in \Parents'(\nu(v))}x_i\prod_{r\in \Roots(\gamma)}\partial_{\nu(r)} \\&= \sum_\nu\prod_{v\in \Vertices(\gamma)}\prod_{i\in \Parents'(\nu(v))\backslash\nu(\Parents(v))}x_i \prod_{r\in \Roots(\gamma)}\partial_{\nu(r)}.
\end{align}
If $\nu\in \hom(\gamma,\gamma')$ sends more than one edge in $\gamma$ to the same edge in $\gamma'$, the expression becomes 0, so it is enough to consider graph embeddings, $\nu\in(\gamma\hookrightarrow\gamma')$, 
the maps that are injective both on vertices and edges. If some edge in $\gamma'$ is not covered by the image of an edge in $\gamma$, the result is a monomial $\prod_i x_i$ running over all edges not covered by the embedding, which evaluates to 0 at $x=0$. If $\nu$ is a graph isomorphism the expression evaluates to 1. 
Hence, we conclude  that for the root component (numbered one by convention):
\begin{align}\paren*{\gamma[\fv]}^1(0) = \begin{cases}\sigma(\gamma) & \text{if $\gamma=\gamma'$,}\\ 0 &\text{otherwise.} \end{cases}
\end{align}
\end{proof}

\subsection{Aromatic series on dual vector fields}

For regular B-series, \autoref{le:eldual} provides a dual basis to the elementary differential.
For aromatic series we must take into account polynomial relations, such as $(\mu\mu)[f_{\mu}] = \paren[\big]{\mu[f_{\mu}]}^2$ for $\mu\in M$. 
The goal of this section is thus to construct the equivalent of the vector fields of \autoref{le:eldual}, but for aromatic trees.
As we shall see, we cannot achieve a corresponding result, but a result that suffices for our purpose.

We need an elementary result first.
If $\gamma\in\Graph$ is disconnected, $\gamma=\gamma_1\mul\gamma_2$, we can decompose $\gamma[f]$ in the following way.

\begin{lemma}\label{le:tpr}For $\gamma_1\in \Graph[r1]$, $\gamma_2\in \Graph[r2]$ we have
	\begin{equation}\label{eq:tensorprod}
		\eldappl{\gamma_1\mul\gamma_2}{f} = \eldappl{\gamma_1}{f}\cdot\eldappl{\gamma_2}{f} 
	\end{equation}
	where the product on the right denotes the symmetric tensor product.
\end{lemma}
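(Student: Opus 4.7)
The plan is to prove this directly from the explicit formula~\eqref{eq:eldiff2b} for the elementary differential, exploiting the fact that the disjoint-union construction on graphs cleanly factors each piece appearing in that formula.

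First, I would observe that because $\gamma_1\mul\gamma_2$ is a disjoint union, the combinatorial data entering~\eqref{eq:eldiff2b} splits as
\[
\Vertices(\gamma_1\mul\gamma_2)=\Vertices(\gamma_1)\sqcup\Vertices(\gamma_2),\qquad
\Roots(\gamma_1\mul\gamma_2)=\Roots(\gamma_1)\sqcup\Roots(\gamma_2),
\]
and for each $v\in\Vertices(\gamma_i)$ the parent set $\Parents(v)$ computed in $\gamma_1\mul\gamma_2$ is exactly the parent set computed in $\gamma_i$ (there are no edges between the two components). Consequently, a map $\nu\colon\Vertices(\gamma_1\mul\gamma_2)\to[n]$ is uniquely a pair $(\nu_1,\nu_2)$ with $\nu_i\colon\Vertices(\gamma_i)\to[n]$, and the sum over $\nu$ in~\eqref{eq:eldiff2b} becomes a double sum over $\nu_1$ and $\nu_2$.

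Next, I would split each of the two products in~\eqref{eq:eldiff2b} across the two components:
\[
\prod_{v\in\Vertices(\gamma_1\mul\gamma_2)} f^{\nu(v)}_{\nu(\Parents(v))}
=
\Bigl(\prod_{v\in\Vertices(\gamma_1)}f^{\nu_1(v)}_{\nu_1(\Parents(v))}\Bigr)\Bigl(\prod_{v\in\Vertices(\gamma_2)}f^{\nu_2(v)}_{\nu_2(\Parents(v))}\Bigr),
\]
and similarly for the root factor $\prod_{r\in\Roots(\gamma)}\partial_{\nu(r)}$, interpreted as a symmetric tensor product of basis vectors in $\RR^n$. Distributivity of the sum over the now-factored summand then yields exactly the symmetric tensor product $\eldappl{\gamma_1}{f}\cdot\eldappl{\gamma_2}{f}$.

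The only substantive point to nail down is the interpretation of the root factor: when the elementary differential lives in $(\RR^n)^{\otimes r}\otimes\sym^k(\pvf[n])$, the unordered product $\prod_{r\in\Roots(\gamma)}\partial_{\nu(r)}$ must be read as a symmetric tensor, so that decomposing a product over $\Roots(\gamma_1)\sqcup\Roots(\gamma_2)$ into a product over $\Roots(\gamma_1)$ times a product over $\Roots(\gamma_2)$ corresponds precisely to the symmetric tensor product, not an ordered one. Once this bookkeeping on the root-tensor slot is pinned down, the rest is a purely mechanical factorization of the sum and product, so this is where I would spend a sentence or two of care; the remainder of the argument is immediate.
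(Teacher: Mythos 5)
Your proposal is correct and follows essentially the same route as the paper, whose proof simply notes that for a disjoint union the parent sets stay within each component and that the result is then ``readily checked'' from \eqref{eq:eldiff2b}; your argument is just the explicit version of that check (splitting $\nu$ into a pair $(\nu_1,\nu_2)$ and factoring the sums and products). Your extra care about reading the root factor as a symmetric rather than ordered tensor product is a reasonable point of bookkeeping that the paper leaves implicit.
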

\begin{proof}
	If the graph $\gamma = \gamma_1\mul\gamma_2$ is disconnected then $\Parents(v_1)\subset \Vertices(\gamma_1)$ for all $v_1\in \Vertices(\gamma_1)$ and similar for $\gamma_2$, and the
	result is readily checked from~(\refeq{eq:eldiff2b}).
\end{proof}
Note that we will only use that result for products of graphs in $\Graph[0]$, i.e., products of molecules, or products of elements in $\Graph[0]$ and $\Graph[1]$.
In particular, the tensor product on the right will always be either a scalar or a vector.

We now come to the central result of this section.

\begin{lemma}\label{prop:monomial}
	Fix aromatic molecules $\mu_1,\ldots,\mu_m \in \Tc$, scalars $\lambda_1,\ldots,\lambda_m\in\RR$, and a tree $\tau\in\Tree$.
	Define \begin{align}f \coloneqq f_{\tau}\oplus \lambda_1 f_{\mu_1}\oplus\ldots\oplus \lambda_mf_{\mu_m}.\end{align}
	Let $\pi$ be the projection on the first components, that is, $\pi f = f_{\tau}$.
	Choose an arbitrary element $\gamma \in \Graph[1]$.
	If $\gamma = \mu_1^{p_1}\cdots \mu_m^{p_m} \tau$ for some integers $p_1,\ldots,p_m \geq 0$, then
	\begin{align}
	\pi \paren[\big]{\eldappl{\gamma}{f}} = \lambda_1^{\abs{\mu_1}p_1}\cdots \lambda_m^{\abs{\mu_m}p_m} \partial_1
	,
	\end{align}
	otherwise,  $\pi \paren[\big]{\eldappl{\gamma}{f}} = 0 $.
\end{lemma}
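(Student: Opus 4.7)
The plan is to decompose $\gamma$ into its aromatic factors, split the elementary differential across the direct sum defining $f$, and read off each piece via the duality in \autoref{le:eldual}. By \autoref{lem:product_decomposition}, write $\gamma = \mu'_1\cdots\mu'_k\tau'$ with $\mu'_j\in M$ and $\tau'\in T$; by \autoref{le:tpr} (applied inductively),
\[
\eldappl{\gamma}{f} = \eldappl{\mu'_1}{f}\cdots\eldappl{\mu'_k}{f}\cdot\eldappl{\tau'}{f},
\]
a product of $k$ scalar factors times one vector factor.

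Next, I expand each factor across the partition $f = f_\tau\oplus\lambda_1 f_{\mu_1}\oplus\cdots\oplus\lambda_m f_{\mu_m}$. The elementary differential is homogeneous of degree $|\mu'_j|$ in the vector field (each of the $|\mu'_j|$ vertices in the product formula contributes one copy of $f$), so combining \autoref{le:parteldiff} with this scaling yields
\[
\eldappl{\mu'_j}{f} = \eldappl{\mu'_j}{f_\tau} + \sum_{i=1}^m \lambda_i^{|\mu'_j|}\eldappl{\mu'_j}{f_{\mu_i}};
\]
by \autoref{le:eldual}~\eqref{eq:eld1}--\eqref{eq:eld2} (and the tacit assumption that the $\mu_i$ are pairwise distinct) this collapses to $\lambda_{i(j)}^{|\mu_{i(j)}|}$ when $\mu'_j = \mu_{i(j)}$ for some index $i(j)$, and to $0$ when $\mu'_j\notin\{\mu_1,\ldots,\mu_m\}$. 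The tree factor splits analogously, and \eqref{eq:eld3} annihilates the molecule-blocks, giving $\pi(\eldappl{\tau'}{f}) = \eldappl{\tau'}{f_\tau}$; by \eqref{eq:eld4} this equals $\partial_1$ if $\tau'=\tau$ and vanishes if $\tau'\neq\tau$.

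Assembling, when $\gamma = \mu_1^{p_1}\cdots\mu_m^{p_m}\tau$, grouping the molecular scalars produces $\prod_i \lambda_i^{|\mu_i|p_i}$ and the tree factor contributes the vector $\partial_1$, giving exactly the claimed formula; otherwise either some $\mu'_j\notin\{\mu_1,\ldots,\mu_m\}$ (killing a scalar factor) or $\tau'\neq\tau$ (killing the projected vector factor), so $\pi(\eldappl{\gamma}{f}) = 0$.

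The delicate step is the tree case: \eqref{eq:eld4} only directly records the first (root) component of $\eldappl{\tau'}{f_\tau}$, whereas the claim is an identity of vectors in $\RR^{|\tau|}$. The extra ingredient I expect to need is the observation that any graph homomorphism from $\tau'$ into $\dgr(f_\tau)=\tau$ sending the root of $\tau'$ to a non-root vertex $k$ must factor through the subtree of $\tau$ rooted at $k$, which has strictly fewer vertices than $\tau$; the parent-set matching required for a non-vanishing constant contribution then fails, so the non-root components of $\eldappl{\tau'}{f_\tau}(0)$ vanish as well. This completes the identification of the whole projected vector and closes the argument.
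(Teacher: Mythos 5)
Your proof is correct and follows essentially the same route as the paper's: decompose $\gamma$ into its molecular factors and a tree via \autoref{lem:product_decomposition}, factorize the elementary differential with \autoref{le:tpr}, split each factor across the direct sum using \autoref{le:parteldiff} together with homogeneity, and evaluate the pieces with \autoref{le:eldual}. Your closing observation about the non-root components of $\eldappl{\tau'}{f_{\tau}}$ is a genuine and correct refinement: the paper's own proof only verifies the root component, so your embedding argument (any homomorphism sending the root of $\tau'$ to a non-root vertex leaves edges of $\tau$ uncovered, hence contributes a monomial vanishing at the origin) makes explicit a step the published argument leaves implicit.
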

\newcommand*\ftaumu{\fv}
\begin{proof} 
	An aromatic tree $\gamma$ can always be written as
	\begin{align}
		\gamma = \sigma \mu_1^{p_1}\cdots \mu_m^{p_m} \tau'
	\end{align}
	for some integers $p_i \geq 0$, an element $\sigma\in\Graph[0]$ which does not contain any of the molecules $\mu_i$, and a regular tree $\tau'\in \Tree$.

First, using \autoref{le:tpr} we obtain that
\begin{align}
	\label{eq:gammamul}
	\gamma[\ftaumu] = \sigma[\ftaumu] \mul (\mu_1[\ftaumu])^{p_1} \cdots ( \mu_m[\ftaumu])^{p_m} \mul \tau'[\ftaumu]
\end{align}
Now, using \autoref{le:parteldiff} and that $\mu$ is $\abs{\mu}$-linear, we obtain that for any molecule $\mu \in \Tc$:
\begin{align}
	\label{eq:muplus}
	\mu[\ftaumu] = \dappl{\mu}{\fdual{\tau}} + \lambda_1^{\abs{\mu}}\dappl{\mu}{\fdual{\mu_1}} + \cdots + \lambda_m^{\abs{\mu}}\dappl{\mu}{\fdual{\mu_m}}
.
\end{align}
Using \eqref{eq:muplus} with $\mu=\mu_i$, we obtain using \autoref{le:eldual} that $\dappl{\mu_i}{\ftaumu} = \lambda_i^{\abs{\mu_i}}$.
If $\sigma$ is not empty, it contains one molecule $\mu$, which, by assumption is distinct from any of the $\mu_i$, and \eqref{eq:muplus} is then zero.
As \eqref{eq:muplus} factors $\dappl{\sigma}{\ftaumu}$ which in turn factorises \eqref{eq:gammamul}, the whole expression \eqref{eq:gammamul} is zero.
If $\sigma$ is empty, we have by convention that $\sigma[\ftaumu] = 1$.
Similarly, we obtain from \autoref{le:parteldiff} that
\begin{align}\dappl{\tau'}{\ftaumu} = \dappl{\tau'}{\fdual{\tau}} \oplus \lambda_1^{\abs{\tau'}}\dappl{\tau'}{\fdual{\mu_1}}\oplus \cdots \oplus \lambda_m^{\abs{\tau'}}\dappl{\tau'}{\fdual{\mu_m}}
	.
\end{align}
We conclude that if $\tau' \neq \tau$ then the root component (which, by convention, has label $1$) of \eqref{eq:gammamul} is zero.
If $\tau' = \tau$ then $\dappl{\tau'}{\ftaumu}^{\rootcomp} = \dappl{\tau}{\ftaumu}^{\rootcomp} = 1$, which concludes the proof.
\end{proof}

\section{Proof of the core result}
\label{sec:mainproof}

We now set out to prove what is the core result of this paper.
Indeed, the following result is the main ingredient in the proof of \autoref{thm:main}.
\begin{theorem}
	\label{prop:bijection}
	For any degree $k$, $\eldiff$ induces a bijection between elements of $\spangle{\Tree_k}$ and equivariant sequences of $k$-forms.
\end{theorem}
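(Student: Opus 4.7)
The plan is to split the statement into injectivity and surjectivity of $\eldiff$ restricted to $\spangle{\Tree_k}$. Injectivity is straightforward from the dual basis property of \autoref{le:eldual}: if $\sum_{\tau \in \Tree_k} c_\tau \tau$ maps to the zero equivariant sequence, then evaluating in dimension $n = k$ at the vector field $f_{\tau'}$ and reading the root component at the origin gives $\sum_\tau c_\tau \tau[f_{\tau'}]^{\rootcomp}(0) = c_{\tau'}$, so \eqref{eq:eld4} forces every $c_{\tau'}$ to vanish.

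For surjectivity, fix an equivariant sequence $P$. By \autoref{prop:surjapoly}, in each dimension $n$ there exists $\gamma_n \in \spangle{\Graph[1][k]}$ with $\eldiff_n(\gamma_n) = P_n$, and this $\gamma_n$ is unique when $n \geq k$. The first step is to show that $\gamma_n$ stabilises for $n \geq k$ to a common element $\gamma$: given $m \geq n \geq k$ and $f \in \pvf[n]$, the projection $\Proj[m][n]$ intertwines $f \oplus 0 \in \pvf[m]$ with $f$ by \autoref{prop:defpartition}, so equivariance of $P$ combined with \autoref{prop:eldiffcompat} yields
\begin{equation*}
\eldiff_n(\gamma_m)(f) = \Proj[m][n] \eldiff_m(\gamma_m)(f \oplus 0) = \Proj[m][n] P_m(f \oplus 0) = P_n(f) = \eldiff_n(\gamma_n)(f),
\end{equation*}
and the injectivity half of \autoref{prop:surjapoly} then forces $\gamma_m = \gamma_n$. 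Running the same projection argument from any $m \geq k$ down to dimensions $n < k$ extends $\eldiff_n(\gamma) = P_n$ to all $n$.

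The substantial step is then to show $\gamma \in \spangle{\Tree_k}$, i.e.\ that no aromatic tree with a nontrivial molecular part contributes to $\gamma$. Decomposing $\gamma = \sum c_{\gamma'} \gamma'$ according to \autoref{lem:product_decomposition}, fix any candidate $\gamma_0 = \mu_1^{p_1} \cdots \mu_m^{p_m} \tau$ with some $p_i \geq 1$, where $\mu_1, \ldots, \mu_m$ are the distinct molecules appearing in $\gamma_0$. Apply $P$ to the test field $f = f_\tau \oplus \lambda_1 f_{\mu_1} \oplus \cdots \oplus \lambda_m f_{\mu_m}$ from \autoref{prop:monomial} and project via $\pi$ onto the $f_\tau$-block; equivariance gives $\pi \eldappl{\gamma}{f}(0) = \eldappl{\gamma}{f_\tau}(0)$. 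By \autoref{prop:monomial} the left side equals $\bigl( \sum_{q_1, \ldots, q_m \geq 0} c_{\mu_1^{q_1} \cdots \mu_m^{q_m} \tau} \prod_i \lambda_i^{\abs{\mu_i} q_i} \bigr) \partial_1$, while \autoref{le:eldual} together with \autoref{le:tpr} collapses the right side to $c_\tau \partial_1$. Treating this as a polynomial identity in the $\lambda_i$, every nonconstant coefficient must vanish, in particular $c_{\gamma_0} = 0$; letting $\gamma_0$ range over all aromatic trees with nontrivial molecular part completes the proof.

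The main obstacle is precisely this last coefficient extraction. The ambient space $\spangle{\Graph[1][k]}$ is strictly larger than $\spangle{\Tree_k}$, and the coupling of tree and molecular factors in a generic $\gamma' \in \Graph[1][k]$ is what distinguishes aromatic series from B-series; it is \autoref{prop:monomial}, engineered so that distinct aromatic trees sharing a fixed tree part $\tau$ produce distinct monomials in the auxiliary parameters $\lambda_i$, that makes the polynomial matching argument decisive. Combined with the dimensional stabilisation across $n \geq k$, this yields the full surjectivity.
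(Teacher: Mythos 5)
Your injectivity and surjectivity arguments are sound: the dual-basis route to injectivity via \autoref{le:eldual} is a small but valid variation on the paper, which instead simply invokes the bijectivity clause of \autoref{prop:surjapoly} in dimension $n=k$; and your surjectivity argument (dimensional stabilisation of the $\gamma_n$ via $f\oplus 0$ and \autoref{prop:eldiffcompat}, then killing the molecular parts with the test fields of \autoref{prop:monomial} and a polynomial identity in the $\lambda_i$) is essentially the paper's \autoref{prop:Asurjective} and \autoref{prop:Bsurjective}, merely reorganised as a coefficient extraction per aromatic tree rather than per polynomial $p(\mu_1,\ldots,\mu_m)$.

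However, there is a genuine gap: you never show that $\eldiff$ maps $\spangle{\Tree_k}$ \emph{into} the space of equivariant sequences of $k$-forms. Your two halves establish that $\eldiff$ restricted to $\spangle{\Tree_k}$ is injective and that its image \emph{contains} every equivariant sequence; for the claimed bijection you also need that the image contains nothing else, i.e.\ that every linear combination of trees actually yields an equivariant sequence. This is the compatibility direction, stated as \autoref{prop:bseriesequi} and listed in the paper as one of the three ingredients of the proof alongside injectivity and surjectivity. It is the classical ``easy'' direction (B-series respect affine maps), but it is not free: the paper proves it by writing elementary differentials of trees through the universality of the free pre-Lie algebra (\autoref{prop:freeprelie}) and the equivariance of the connection $f\tr g$ (\autoref{lem:connectequivar}). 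You should add this step; everything else in your argument stands.
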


\autoref{prop:bijection} is proved through \autoref{prop:injective} (injectivity), \autoref{prop:bseriesequi} (compatibility), and \autoref{prop:surjection} (surjectivity).

\subsection{Injectivity}

\begin{proposition}
	\label{prop:injective}
	The elementary differential map $\eldiff$ is injective.
\end{proposition}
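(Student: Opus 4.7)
My approach would be to isolate individual coefficients of a hypothetical vanishing combination via the dual-basis vector fields from \autoref{prop:monomial}. I would assume that $\eta = \sum_{\gamma} c_\gamma \gamma \in \spangle{\Graph[1][k]}$ satisfies $\eldiff(\eta) = 0$ and try to show every $c_\gamma$ vanishes. By \autoref{lem:product_decomposition}, each $\gamma$ in the (finite) support decomposes uniquely as $\gamma = \mu_1^{p_1}\cdots\mu_s^{p_s}\tau$ with $\mu_i\in\Tc$ and $\tau\in\Tree$; I would collect all molecules that appear in any supported $\gamma$ into a single finite list $\{\mu_1,\ldots,\mu_s\}$.

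The main step is to fix an arbitrary tree $\tau_0\in\Tree$ and work in dimension $N \coloneqq \abs{\tau_0} + \sum_{i=1}^{s}\abs{\mu_i}$ with the parametrised vector field $f(\lambda) \coloneqq f_{\tau_0}\oplus \lambda_1 f_{\mu_1}\oplus\cdots\oplus \lambda_s f_{\mu_s}$ and the projection $\pi$ onto the first $\abs{\tau_0}$ components. By \autoref{prop:monomial}, for every $\gamma\in\Graph[1][k]$ one has
\begin{equation*}
\pi\paren[\big]{\eldappl{\gamma}{f(\lambda)}} = \begin{cases}\lambda_1^{\abs{\mu_1}p_1}\cdots\lambda_s^{\abs{\mu_s}p_s}\,\partial_1 & \text{if } \gamma = \mu_1^{p_1}\cdots\mu_s^{p_s}\tau_0,\\ 0 & \text{otherwise,}\end{cases}
\end{equation*}
so linearity together with $\eldiff_N(\eta) = 0$ yields the polynomial identity
\begin{equation*}
\sum_{(p_1,\ldots,p_s)}c_{\mu_1^{p_1}\cdots\mu_s^{p_s}\tau_0}\,\lambda_1^{\abs{\mu_1}p_1}\cdots\lambda_s^{\abs{\mu_s}p_s} = 0 \quad \text{for all } \lambda\in\RR^s,
\end{equation*}
the sum being finite by the degree constraint $\sum_i\abs{\mu_i}p_i + \abs{\tau_0} = k$.

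Since $\abs{\mu_i}\geq 1$ for each $i$, distinct multi-indices $(p_1,\ldots,p_s)$ produce distinct exponent vectors and hence pairwise distinct monomials, so every coefficient in the polynomial identity must vanish; letting $\tau_0$ range over all trees appearing in the support then forces $c_\gamma = 0$ for every $\gamma$, which establishes injectivity. I do not foresee a major obstacle here: \autoref{prop:monomial} is tailored precisely to perform this separation. The one genuine subtlety is that the argument must be carried out at the sequence level rather than in any fixed dimension, because $\eldiff_n$ has non-trivial kernel for small $n$ (as illustrated by the paper's examples in $\pvf[1]$ and $\pvf[2]$); the dimension $N$ chosen above is exactly what is needed to witness the non-vanishing of every aromatic tree in the support simultaneously.
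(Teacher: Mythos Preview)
Your argument is correct, but it takes a genuinely different route from the paper. The paper's proof of \autoref{prop:injective} is a one-liner: if $\eldiff(\gamma)=0$ for $\gamma\in\spangle{\Graph[1][k]}$, then in particular $\eldiff_k(\gamma)=0$, and \autoref{prop:surjapoly} (imported from \cite{mk-verdier}) asserts that $\eldiff_n$ is a bijection on $\spangle{\Graph[1][k]}$ whenever $n\geq k$, so $\gamma=0$.

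Your approach instead builds an explicit separating family via \autoref{prop:monomial}: you evaluate the sequence at the parametrised dual field $f(\lambda)=f_{\tau_0}\oplus\lambda_1 f_{\mu_1}\oplus\cdots\oplus\lambda_s f_{\mu_s}$ and read off a polynomial identity in the $\lambda_i$ whose monomials are distinct because $\abs{\mu_i}\geq 1$. This is essentially an \emph{ab initio} proof of the injectivity half of \autoref{prop:surjapoly}, restricted to the subspace you need, using the paper's own dual-basis machinery rather than the invariant-theoretic result quoted from \cite{mk-verdier}. What you gain is self-containment: your argument does not appeal to anything outside Sections~\ref{sec:prelim}--\ref{sec:special}. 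What the paper gains is brevity, since the hard work has already been packaged in \autoref{prop:surjapoly}. Your observation that one must work at the sequence level (choosing $N$ large enough) is exactly right and mirrors the paper's choice of $n=k$.
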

\begin{proof}
	Suppose that $\gamma\in\spangle{\Graph[1][k]}$ and that $\eldiff(\gamma) = 0$.
	Then we have in particular $\eldiff_k(\gamma) = 0$ and \autoref{prop:surjapoly} yields $\gamma = 0$.
\end{proof}

\subsection{B-series are equivariant sequences}

In this subsection we prove the following result.
\begin{proposition}
	\label{prop:bseriesequi}
	$\eldiff$ maps $\spangle{\Tree_k}$ to the space of equivariant sequences of $k$-forms.
\end{proposition}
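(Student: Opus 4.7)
The plan is to prove by induction on the number of vertices that, for each tree $\tau \in T$, the elementary differential $\eldiff_{(\cdot)}(\tau)$ respects the intertwining relation, and then extend by $\RR$-linearity to $\spangle{\Tree_k}$. Concretely, given $A \in \Lin(\RR^n,\RR^m)$ and $f \affrel[A] g$, I want to show $\eldappl{\tau}{f}[n] \affrel[A] \eldappl{\tau}{g}[m]$, i.e., $A \eldappl{\tau}{f}[n](x) = \eldappl{\tau}{g}[m](Ax)$ for all $x \in \RR^n$.

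The first ingredient is that intertwining propagates to higher derivatives: differentiating the defining identity $g(Ax) = A f(x)$ repeatedly in $x$ via the chain rule yields, for every $k \geq 0$ and all $v_1,\ldots,v_k \in \RR^n$,
\begin{equation}
D^k g(Ax)(A v_1, \ldots, A v_k) = A \cdot D^k f(x)(v_1, \ldots, v_k).
\end{equation}
The second ingredient is a recursive reading of formula~\eqref{eq:eldiff2b}: if $\tau \in T$ has root $r$ with $d$ children carrying subtrees $\tau_1, \ldots, \tau_d$, then, grouping the sum over $\nu$ first by the value $\nu(r)$ and then by the restrictions to each subtree, one obtains
\begin{equation}
\eldappl{\tau}{f}[n](x) = D^d f(x)\bigl(\eldappl{\tau_1}{f}[n](x), \ldots, \eldappl{\tau_d}{f}[n](x)\bigr),
\end{equation}
with the convention $D^0 f(x) = f(x)$ covering the case $d = 0$.

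With these two ingredients, the induction on $\abs{\tau}$ is essentially automatic. The base case is the single-vertex tree $\oneroot$ for which $\eldappl{\oneroot}{f} = f$, so the required identity is precisely the hypothesis $f \affrel[A] g$. For the inductive step, the inductive hypothesis gives $A \eldappl{\tau_i}{f}[n](x) = \eldappl{\tau_i}{g}[m](Ax)$ for each subtree, and then
\begin{align}
A \eldappl{\tau}{f}[n](x) &= A \cdot D^d f(x)\bigl(\eldappl{\tau_1}{f}[n](x), \ldots, \eldappl{\tau_d}{f}[n](x)\bigr) \\
&= D^d g(Ax)\bigl(A\eldappl{\tau_1}{f}[n](x), \ldots, A\eldappl{\tau_d}{f}[n](x)\bigr) \\
&= D^d g(Ax)\bigl(\eldappl{\tau_1}{g}[m](Ax), \ldots, \eldappl{\tau_d}{g}[m](Ax)\bigr) \\
&= \eldappl{\tau}{g}[m](Ax),
\end{align}
using first the recursion for $\tau$, then the derivative intertwining identity, then the inductive hypothesis on each subtree, and finally the recursion for $\tau$ viewed from the $g$ side. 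Linearity of $\eldiff_n$ in its graph argument then extends the equivariance property from individual trees to arbitrary elements of $\spangle{\Tree_k}$. The argument is structural with no genuine obstacle; the only point requiring care is a clean statement of the recursive form of the elementary differential and matching the $d=0$ base case with the convention on zero-order derivatives.
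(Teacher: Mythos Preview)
Your proof is correct and complete. The approach differs from the paper's, so a brief comparison is worthwhile.

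The paper proves \autoref{prop:bseriesequi} via the pre-Lie structure: it introduces the connection $f\tr g = Dg\cdot f$, proves directly (\autoref{lem:connectequivar}) that $\tr$ is equivariant under intertwining, and then invokes the universal property of the free pre-Lie algebra (\autoref{prop:freeprelie}) to write every elementary differential through the binary recursion $\eldfappl{\tau_1\tr\tau_2}{f}=\eldfappl{\tau_1}{f}\tr\eldfappl{\tau_2}{f}$, starting from $\eldfappl{\oneroot}{f}=f$. Equivariance then propagates along this recursion.

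You instead use the classical root--subtree decomposition $\tau = B^+(\tau_1,\ldots,\tau_d)$ and the corresponding recursion $\eldappl{\tau}{f}=D^d f\bigl(\eldappl{\tau_1}{f},\ldots,\eldappl{\tau_d}{f}\bigr)$, together with the chain-rule identity $D^d g(Ax)(Av_1,\ldots,Av_d)=A\,D^d f(x)(v_1,\ldots,v_d)$. This is essentially the $d$-ary analogue of the paper's binary step (the case $d=1$ is exactly $\tr$), so the two inductions run on different tree decompositions but encode the same mechanism. Your route is more elementary and self-contained---no pre-Lie machinery is needed---while the paper's route highlights the algebraic reason behind the result and reuses structure (\autoref{prop:freeprelie}) already present for other purposes.
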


\newcommand*\Alg{\mathcal{A}}

The geometry of affine spaces is closely related to the existence of a flat, torsion-free connection. 
Note that for each $n\in \NN$ the following \demph{connection} $\tr_n\colon \pvf[n]\times\pvf[n]\rightarrow\pvf[n]$ is well defined because its result is also a polynomial vector field:
\begin{align}
	(f\tr_n g)(x) \coloneqq \left.\frac{\partial}{\partial t}\right|_{t=0}g\big(f(x+tf(x)\big) .
\end{align}

As before, we consider now a connection $\tr$ as a \emph{sequence} of the connections on all dimensions $n$.
We also omit the dimension when the context is clear, so we write:
\begin{align}
f \tr g \coloneqq f \tr_n g \qquad f,g \in \pvf.
\end{align}

\begin{lemma}\label{lem:connectequivar}The connection $\tr$ is an equivariant sequence in the following sense:
	\begin{align}f\affrel[A] \widetilde{f} \text{ and } g\affrel[A] \widetilde{g}\quad\implies\quad \paren*{f\tr g}\affrel[A] \paren*{\widetilde{f}\tr\widetilde{g}}
		\qquad
		A\in\Lin(\RR^n,\RR^m)
		.
	\end{align}
\end{lemma}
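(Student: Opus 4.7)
The plan is a direct computation from the definition of $\tr$. Given $f \affrel[A] \widetilde{f}$ and $g \affrel[A] \widetilde{g}$, the goal is to verify that $(\widetilde{f}\tr\widetilde{g})(Ax) = A\,(f\tr g)(x)$ for every $x \in \RR^n$; equivalently, that $(f\tr g) \affrel[A] (\widetilde{f}\tr\widetilde{g})$ according to the linear intertwining definition \eqref{eq:deflinintertwine}.

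Starting from the definition, I would substitute $Ax$ into the defining formula for $\widetilde{f}\tr\widetilde{g}$, producing an expression of the form $\tfrac{\partial}{\partial t}\big|_{t=0}\widetilde{g}(\,\cdots Ax + t\widetilde{f}(Ax)\cdots\,)$. The first intertwining relation, $\widetilde{f}(Ax) = A f(x)$, combined with linearity of $A$, then lets me rewrite the inner shifted argument as $A(x + tf(x))$, effectively factoring $A$ out of the argument of $\widetilde{g}$. The second intertwining relation, $\widetilde{g}(A\,\cdot\,) = A\,g(\,\cdot\,)$, next lets me factor $A$ out of the outer evaluation of $\widetilde{g}$, so that the expression under the $t$-derivative becomes $A$ applied to the corresponding formula defining $f \tr g$. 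Finally, since $A$ is linear it commutes with $\partial/\partial t|_{t=0}$, and pulling $A$ outside the derivative produces exactly $A\,(f \tr g)(x)$.

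I do not expect any real obstacle here: the lemma is a routine consequence of the chain rule together with the fact that linear maps commute with differentiation. The only point requiring care is the order in which the two intertwining relations are applied — first to $\widetilde{f}$ to rewrite the argument of $\widetilde{g}$, and only then to $\widetilde{g}$ itself — so that each intermediate equality is between well-defined expressions. The same calculation would, in fact, go through for any polynomial expression in $f$, $g$, and their derivatives that is built from operations commuting with $A$ and with differentiation, which is essentially why \textbf{all} elementary differentials, and hence all B-series, turn out to be equivariant in \autoref{prop:bseriesequi}.
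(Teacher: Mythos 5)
Your proposal is correct and is essentially the paper's own proof: both are the same one-line computation from the definition of $\tr$, using linearity of $A$ (to commute it past the $t$-derivative and to factor the inner argument as $A(x+tf(x))$) together with the two intertwining relations; you merely run the chain of equalities in the opposite direction, starting from $(\widetilde{f}\tr\widetilde{g})(Ax)$ instead of $A(f\tr g)(x)$. No gap.
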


\begin{proof} Let $y  = A x$. Then
	\begin{align}
		A(f\tr g)(x) &= \left.\frac{\partial}{\partial t}\right|_{t=0} Ag\paren[\big]{x+tf(x)} = \left.\frac{\partial}{\partial t}\right|_{t=0} \widetilde{g}\paren[\big]{Ax+tAf(x)} \\ &= \left.\frac{\partial}{\partial t}\right|_{t=0}\widetilde{g}\paren[\big]{y+t\widetilde{f}(y)} = (\widetilde{f}\tr\widetilde{g})(y).
	\end{align}
\end{proof}

In the language of algebra, $\set{\pvf[n],\tr_n}$ is an example of a \emph{pre-Lie algebra} \cite{CaEFMa11}, i.e., a vector space with a bilinear binary operation $\tr = \tr_n$ that is neither commutative nor associative, but satisfies the pre-Lie relation
\begin{align}
	f\tr(g\tr h) - (f\tr g)\tr h = g\tr(f\tr h) - (g\tr f)\tr h .
\end{align}

Recall that $\Tree$ denotes the set of all rooted trees, that for $\tau\in T$, $|\tau|$ denote the number of vertices in $\tau$ and that $\Tv$ is the free $\RR$-vector space over $T$.

The \emph{free pre-Lie algebra}, denoted $\set{\Tv,\tr}$, is defined by the binary operation $\tr\colon \Tv\times\Tv\rightarrow\Tv$ given by \emph{grafting} on trees. 
That is, the binary operation $\tr$ given by summing over all trees resulting from attaching successively the tree $\tau_1$ to each vertex of $\tau_2$: 
\begin{equation}
\label{treePL}
	\tau_1  \tr \tau_2 := \sum_{v \in V(\tau_2)} \tau_1 \circ_{v} \tau_2,
\end{equation}
where $ \tau_1 \circ_{v} \tau_2$ denotes attachment of the root of $\tau_1$ to the vertex $v$ of $\tau_2$ via a new edge.
The free pre-Lie algebra is \emph{universal} in the category of pre-Lie algebras:
\begin{proposition}[\cite{ChLi01}]
\label{prop:freeprelie}
For any pre-Lie algebra $\set{\Alg,\tr}$ and any $f\in \Alg$, there exists a unique map $\eldfappl{\cdot}{f}\colon \Tv\rightarrow \Alg$ defined by linearity and the recursion
\begin{subequations}
	\label{eq:frecur}
\begin{align}
	\eldfappl{\oneroot}{f} &= f \label{eq:frecur1}\\
	\eldfappl{\tau_1\tr\tau_2}{f} & = \eldfappl{\tau_1}{\fv}\tr\eldfappl{\tau_2}{\fv},
	\qquad\tau_1,\tau_2\in \Tv
	\label{eq:frecur2}
	.
\end{align}
\end{subequations}
\end{proposition}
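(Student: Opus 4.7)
The plan is to treat existence and uniqueness separately, proceeding by induction on the number of vertices $\abs{\tau}$, with the nontrivial existence step deferred to the cited Chapoton--Livernet theorem.

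For uniqueness, suppose $F_1, F_2 \colon \Tv \to \Alg$ both satisfy the recursion \eqref{eq:frecur}. I would prove $F_1(\tau) = F_2(\tau)$ for every $\tau \in \Tree$ by induction on $\abs{\tau}$, from which equality on all of $\Tv$ follows by linearity. The base case $\abs{\tau} = 1$ is handled by \eqref{eq:frecur1}, which pins down the value at $\oneroot$. For the inductive step, the key claim is that every tree with $\abs{\tau} \geq 2$ can be expressed as an $\RR$-linear combination of graftings $\sigma \tr \rho$ where both $\sigma$ and $\rho$ have strictly fewer vertices than $\tau$. To see this, pick a leaf $\ell$ of $\tau$ and let $\rho$ denote $\tau$ with $\ell$ removed; the grafting $\oneroot \tr \rho$ is a sum containing $\tau$ together with other trees obtained by attaching a new leaf at a different vertex of $\rho$. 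These extra trees have the same size as $\tau$, so to close the induction I would invoke a secondary well-ordering on trees of fixed size (for instance the lexicographic order on a canonical parent vector) and eliminate them by iterated subtraction. Once $\tau$ is written in terms of graftings of strictly smaller trees, the recursion \eqref{eq:frecur2} forces $F_1(\tau) = F_2(\tau)$.

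For existence, one must construct $\eldfappl{\cdot}{f}$ so that \eqref{eq:frecur2} is satisfied for all pairs $(\tau_1, \tau_2) \in \Tv \times \Tv$, not merely for the specific decompositions used during construction. The obstruction is that a single tree can arise in many distinct graftings $\sigma \tr \rho$, and consistency of the resulting constraints in $\Alg$ rests precisely on the pre-Lie axiom. Rather than carry out this combinatorics by hand, I would invoke the theorem of Chapoton and Livernet~\cite{ChLi01}: they establish that $\{\Tv, \tr\}$ is the free pre-Lie algebra on the single generator $\oneroot$, which is exactly the universal property asserted here.

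The main obstacle is the existence half. The pre-Lie relation encodes precisely the algebraic identity needed for the recursive assignment to be consistent across the many redundant ways a tree can be expressed as a grafting of smaller trees; however, verifying this by a direct combinatorial argument on rooted trees is delicate. For that reason the most economical proof strategy is to reduce the statement to \cite{ChLi01}, using the uniqueness argument above to guarantee that the map so obtained is the one described by the recursion~\eqref{eq:frecur}.
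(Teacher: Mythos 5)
The paper offers no proof of this proposition: it is stated with the citation \cite{ChLi01} and the universal property of the free pre-Lie algebra is taken as known. Your reduction of the existence half to \cite{ChLi01} therefore coincides with what the paper does. Note also that freeness of $\set{\Tv,\tr}$ on the single generator $\oneroot$ already contains the uniqueness assertion --- a linear map satisfying \eqref{eq:frecur} for all $\tau_1,\tau_2\in\Tv$ is precisely a pre-Lie morphism sending $\oneroot$ to $f$ --- so a separate uniqueness argument is not strictly required once the citation is invoked.

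That said, the uniqueness sketch as written has a soft spot. Removing a leaf $\ell$ from $\tau$ to form $\rho$ and writing $\oneroot\tr\rho=c\,\tau+\sum_{\tau'}c_{\tau'}\tau'$ does produce extra trees $\tau'$ of the same size, but it is not clear that these are strictly smaller than $\tau$ in any well-ordering compatible with your decomposition: for instance, if $\ell$ sits at maximal depth $d$ in $\tau$, grafting the new vertex onto another vertex of $\rho$ at depth $d$ yields a tree of depth $d+1$, so depth-based or parent-vector orders can \emph{increase}, and the ``iterated subtraction'' is not obviously terminating. The standard repair is to peel off an entire root branch instead of a single leaf: write $\tau$ as a root carrying branches $\tau_1,\dots,\tau_m$ and let $\sigma$ be the root carrying only $\tau_2,\dots,\tau_m$; then
\begin{equation*}
	\tau_1\tr\sigma \;=\; \tau \;+\; \text{(a sum of trees of size $\abs{\tau}$ whose root has only $m-1$ branches)},
\end{equation*}
and a double induction, on $\abs{\tau}$ and, within a fixed size, on the number of branches at the root, closes cleanly because both $\tau_1$ and $\sigma$ have strictly fewer vertices than $\tau$. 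With that replacement your generation argument is correct; as it stands, the choice of secondary ordering is the one genuinely unproved step.
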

When $\Alg = \pvf[n]$ and $\tau\in \Tree$, the elements $\eldfappl{\tau}{f}\in \pvf[n]$ are thus the {elementary differentials} that we defined in \autoref{sec:eldiff}, as the recursion equations \eqref{eq:frecur} are fulfilled in that case.

The following result establishes a proof of \autoref{prop:bseriesequi}.

\begin{proof}[Proof of \autoref{prop:bseriesequi}]
	By \autoref{prop:freeprelie}, we can express the elementary differential map $\eldiff$ using connections and the tree $\oneroot$.
	As $\eldfappl{\oneroot}{\cdot}$ is obviously an equivariant sequence, and as the connection is equivariant in the sense of \autoref{lem:connectequivar}, we conclude that $\eldfappl{\tau}{\cdot}$ also is, for any tree $\tau \in \Tree$.
\end{proof}

\subsection{Surjectivity}
The main result of this subsection is the following result.
\begin{proposition}
	\label{prop:surjection}
	$\eldiff$ is a surjection from $\spangle{\Tree_k}$ to the space of equivariant sequences of $k$-forms.
\end{proposition}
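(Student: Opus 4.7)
The plan is threefold: first, produce a single candidate $\gamma \in \spangle{\Graph[1][k]}$ with $\eldiff_k(\gamma) = P_k$ by invoking the bijectivity of $\eldiff_k$ in dimension $k$ from \autoref{prop:surjapoly} (valid since $k \le k$); second, show that this $\gamma$ represents $P$ in every dimension; third, and crucially, show $\gamma$ has no molecular factors, so that $\gamma \in \spangle{\Tree_k}$.

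For the second step, fix a dimension $m$ and use the projection $\Proj[\ell][m]\colon \RR^\ell \to \RR^m$ onto the first $m$ coordinates, which intertwines $f \oplus 0$ with $f$. When $m \le k$, equivariance of $P$ combined with \autoref{prop:eldiffcompat} (which delivers $\eldiff_k(\gamma)[f \oplus 0] = \eldiff_m(\gamma)[f] \oplus 0$) and the surjectivity of $\Proj[k][m]$ yields $\eldiff_m(\gamma) = P_m$. When $m > k$, the surjectivity part of \autoref{prop:surjapoly} supplies some $\gamma_m$ with $\eldiff_m(\gamma_m) = P_m$; the same projection argument applied to $\gamma_m$ forces $\eldiff_k(\gamma_m) = P_k$, and bijectivity of $\eldiff_k$ then gives $\gamma_m = \gamma$, so $\eldiff_m(\gamma) = P_m$.

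The heart of the proof is the third step. Write $\gamma = \sum_j c_j \gamma_j$ with distinct $\gamma_j \in \Graph[1][k]$ and $c_j \neq 0$, and decompose each via \autoref{lem:product_decomposition} as $\gamma_j = \mu_{j,1}^{p_{j,1}} \cdots \mu_{j,s_j}^{p_{j,s_j}} \tau_j$. Fix any tree $\tau$ appearing as some $\tau_j$, list the distinct molecules $\mu_1, \ldots, \mu_r$ occurring in those $\gamma_j$ with $\tau_j = \tau$, and in dimension $N = \abs{\tau} + \sum_i \abs{\mu_i}$ form the partitioned vector field
\[
f = f_\tau \oplus \lambda_1 f_{\mu_1} \oplus \cdots \oplus \lambda_r f_{\mu_r},
\]
with projection $\pi \colon \RR^N \to \RR^{\abs{\tau}}$ onto the first block. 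Iterated application of \autoref{prop:blockequi} gives $\pi\, \dappl{P_N}{f}(0) = \dappl{P_{\abs{\tau}}}{f_\tau}(0)$, whose first (root) component is a scalar independent of the $\lambda_i$. On the other hand, \autoref{prop:monomial} yields $\pi\, \eldappl{\gamma_j}{f}(0) = \lambda_1^{\abs{\mu_1} p_{j,1}} \cdots \lambda_r^{\abs{\mu_r} p_{j,r}} \partial_1$ precisely when $\gamma_j = \mu_1^{p_{j,1}} \cdots \mu_r^{p_{j,r}} \tau$, and vanishes otherwise. Equating root components yields a polynomial identity in the $\lambda_i$ whose right-hand side is constant, while its left-hand side $\sum_{j:\tau_j = \tau} c_j\, \lambda_1^{\abs{\mu_1} p_{j,1}} \cdots \lambda_r^{\abs{\mu_r} p_{j,r}}$ has linearly independent monomials (since $\abs{\mu_i} > 0$ and the $\gamma_j$ are distinct). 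Therefore $c_j = 0$ whenever some $p_{j,i} > 0$; iterating over every tree appearing in $\gamma$ gives $\gamma \in \spangle{\Tree_k}$.

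The main obstacle is exactly this third step: the coupling/decoupling dichotomy emphasized in the introduction must be sharpened enough to separate individual aromatic summands. The family of dual vector fields $f_{\mu_i}$ scaled by independent parameters $\lambda_i$ accomplishes this, because block-equivariance of $P$ collapses all $\lambda$-dependence to a constant, whereas \autoref{prop:monomial} shows that every aromatic summand containing molecular factors produces a distinct non-constant $\lambda$-monomial, and linear independence of these monomials closes the argument.
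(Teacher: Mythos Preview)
Your proof is correct and follows essentially the same route as the paper's: steps~1--2 correspond to \autoref{prop:Asurjective} (using \autoref{prop:surjapoly}, \autoref{prop:eldiffcompat}, and equivariance under the projections $\Proj$ to pin down a single $\gamma\in\spangle{\Graph[1][k]}$ working in all dimensions), and step~3 is \autoref{prop:Bsurjective} (testing against the partitioned dual vector fields $f_\tau\oplus\lambda_1 f_{\mu_1}\oplus\cdots$ via \autoref{prop:monomial} and \autoref{prop:blockequi} to kill molecular factors). Your linear-independence argument for the $\lambda$-monomials is a slightly sharper formulation of the paper's conclusion that $p(\lambda_1^{\abs{\mu_1}},\ldots,\lambda_m^{\abs{\mu_m}})=p(0,\ldots,0)$ forces $p$ constant, but the substance is the same.
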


\begin{proof}
The proof contains two steps: assuming $P$ is an equivariant sequence of $k$-forms, we prove that there exists a  $\gamma\in\spangle{\Graph[1][k]}$ such that $P = \eldiff(\gamma)$ (\autoref{prop:Asurjective}), then we prove that $\gamma$ must in fact be in $\spangle{\Tree_k}$ (\autoref{prop:Bsurjective}).
\end{proof}

\begin{lemma}
	\label{prop:Asurjective}
	$\eldiff$ is a surjection from $\spangle{\Graph[1][k]}$ to the space of equivariant sequences of $k$-forms.
\end{lemma}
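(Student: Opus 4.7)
The plan is to leverage \autoref{prop:surjapoly} together with the compatibility statements \autoref{prop:eldiffcompat} and \autoref{prop:blockequi} to show that the tree representation of $P$ is consistent across dimensions. Concretely, in each dimension $n\geq k$ the map $\eldiff_n$ restricted to $\spangle{\Graph[1][k]}$ is a bijection onto the vector-valued equivariant $k$-forms in dimension $n$, so there is a unique $\gamma_n \in \spangle{\Graph[1][k]}$ with $\eldiff_n(\gamma_n) = P_n$. The core of the proof is to verify that this $\gamma_n$ does not depend on $n$ (for $n\geq k$) and that the common value reproduces $P_m$ also for $m < k$.

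First I would establish independence of $n$ for $n \geq k$. Given $n' > n \geq k$ and $f \in \pvf[n]$, form $f \oplus 0 \in \pvf[n']$ and let $\pi = \Proj[n'][n]$. By \autoref{prop:defpartition}, $\pi$ intertwines $f \oplus 0$ with $f$, so equivariance of $P$ yields
\begin{equation}
\pi \dappl{P_{n'}}{f \oplus 0}(x) = \dappl{P_n}{f}(\pi x) \qquad \forall\,x\in\RR^{n'}.
\end{equation}
The right side equals $\eldappl{\gamma_n}{f}[n](\pi x)$, and by \autoref{prop:eldiffcompat} the left side equals $\pi\paren[\big]{\eldappl{\gamma_{n'}}{f}[n]\oplus 0}(x) = \eldappl{\gamma_{n'}}{f}[n](\pi x)$. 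Since $\pi x$ ranges over all of $\RR^n$, this gives $\eldiff_n(\gamma_{n'})[f] = \eldiff_n(\gamma_n)[f]$ for every $f \in \pvf[n]$, and injectivity of $\eldiff_n$ on $\spangle{\Graph[1][k]}$ (which holds because $n \geq k$) forces $\gamma_{n'} = \gamma_n$. Call this common value $\gamma$.

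Next I would handle the remaining dimensions $m < k$ by the same padding and projection argument, but going downward. For $f \in \pvf[m]$, lift to $f \oplus 0 \in \pvf[k]$ and project with $\pi = \Proj[k][m]$. Equivariance of $P$ combined with $\eldiff_k(\gamma) = P_k$ and \autoref{prop:eldiffcompat} yields $\dappl{P_m}{f}(\pi x) = \eldappl{\gamma}{f}[m](\pi x)$ for all $x\in\RR^k$, hence $P_m = \eldiff_m(\gamma)$. Together with the previous step, $\eldiff(\gamma) = P$ as sequences.

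The main obstacle is really just the careful matching in the first bookkeeping step: one must be sure that projecting $\dappl{P_{n'}}{f \oplus 0}$ back to dimension $n$ produces exactly $\eldiff_n(\gamma_{n'})[f]$, which is where \autoref{prop:eldiffcompat} does the essential work of identifying $\eldappl{\gamma}{f \oplus 0}[n']$ with $\eldappl{\gamma}{f}[n] \oplus 0$. Once that identification is in place, all three steps reduce to invoking fixed-dimension surjectivity and injectivity from \autoref{prop:surjapoly}. Note that only surjective affine (in fact linear) equivariance is used, consistent with the remark after \autoref{thm:main}.
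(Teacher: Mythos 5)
Your proposal is correct and follows essentially the same route as the paper's proof: obtain $\gamma_n$ in each dimension from \autoref{prop:surjapoly}, use the padding map $f\mapsto f\oplus 0$ together with equivariance (\autoref{prop:blockequi}) and \autoref{prop:eldiffcompat} to show the $\gamma_n$ agree, and pin down the common element via injectivity of $\eldiff_n$ for $n\geq k$. The only cosmetic difference is that the paper defines $\gamma_n$ for every $n$ and phrases the consistency step as $\eldiff_m(\gamma_m)=\eldiff_m(\gamma_n)$ for $m\leq n$, whereas you work with $n\geq k$ first and then descend; the substance is identical.
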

\begin{proof}
	Let $P$ be an equivariant sequence of $k$-forms.
	For any $n$, using \autoref{prop:surjapoly} we have $\gamma_n\in\spangle{\Graph[1][k]}$ such that $\eldiff_n(\gamma_n) = P_n$.
	We aim to show that $P = \eldiff(\gamma_k)$.
	\begin{enumerate}
\item
	We first show that for $m\leq n$, we have
\begin{align}
	\label{eq:phimn}
	\eldiff_m(\gamma_m) = \eldiff_m(\gamma_n).
\end{align}
   	Indeed, take a $g\in \pvf[m]$ and construct $f = g\oplus 0$.
	The equivariance property \autoref{prop:blockequi} implies that $P_n[g\oplus 0] = P_m[g] \oplus 0$, which gives $\eldiff_n(\gamma_n)[g \oplus 0] = \eldiff_m(\gamma_m)[g] \oplus 0$.
	Using \autoref{prop:eldiffcompat} we obtain $\eldiff_m(\gamma_n)[g] \oplus 0 = \eldiff_m(\gamma_m)[g] \oplus 0$, so \eqref{eq:phimn} is proved.
\item
	In particular, for $n \leq k$, \eqref{eq:phimn} gives $\eldiff_n(\gamma_k) = \eldiff_n(\gamma_n) = P_n$.
	For $k\leq n$ \eqref{eq:phimn} gives $\eldiff_k(\gamma_k) = \eldiff_k(\gamma_n)$, but as $\gamma_n$ is a $k$-form, we can use \autoref{prop:surjapoly} and obtain  $\gamma_k = \gamma_n$.
	For $n \geq k$, this gives $P_n = \eldiff_n(\gamma_k)$.
\end{enumerate}
	We have shown that $\eldiff_n(\gamma_k) = P_n$ for any $n$, so we conclude that $P = \eldiff(\gamma_k)$.
\end{proof}


\begin{lemma}
	\label{prop:Bsurjective}
	Let $P$ be an equivariant sequence of $k$-forms. 
	If $\eldiff(\gamma) = P$ for  $\gamma\in \spangle{\Graph[1][k]}$, then $\gamma \in \spangle{\Tree_k}$.
\end{lemma}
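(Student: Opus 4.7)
The plan is to write $\gamma = \sum_\alpha c_\alpha \alpha$ as a linear combination over the basis $\Graph[1][k]$ and to show that $c_\alpha = 0$ whenever $\alpha$ is not a pure tree. By \autoref{lem:product_decomposition}, every $\alpha \in \Graph[1][k]$ factors as $\alpha = M_\alpha \cdot \tau_\alpha$ with $\tau_\alpha \in \Tree$ and $M_\alpha$ a (possibly empty) product of aromatic molecules in $\Tc$, so the claim is equivalent to: $c_\alpha = 0$ whenever $M_\alpha$ is non-empty.

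First, I would proceed one tree at a time. Fix a tree $\tau \in \Tree$ and let $\mu_1, \ldots, \mu_s \in \Tc$ enumerate the distinct molecules that occur in some $M_\alpha$ for an $\alpha$ in the (finite) support of $\gamma$ with $\tau_\alpha = \tau$. For arbitrary real parameters $\lambda_1, \ldots, \lambda_s$, following \autoref{prop:monomial} I form the probe vector field
\[
	f \coloneqq \fdual{\tau} \oplus \lambda_1 \fdual{\mu_1} \oplus \cdots \oplus \lambda_s \fdual{\mu_s} \in \pvf[n],
\]
with $n = \abs{\tau} + \sum_i \abs{\mu_i}$, and let $\pi \colon \RR^n \to \RR^{\abs{\tau}}$ denote the projection onto the first $\abs{\tau}$ coordinates. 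By construction $\pi$ intertwines $f$ with $\fdual{\tau}$, so the equivariance of $P$ gives $\pi \bigl( \dappl{P}{f}(0) \bigr) = \dappl{P}{\fdual{\tau}}(0)$.

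Next I would expand both sides using $P = \eldiff(\gamma)$ and isolate the root ($\partial_1$) component. On the right, \autoref{prop:monomial} applied with no molecules tells us that only $\alpha = \tau$ survives, contributing the scalar $c_\tau$. On the left, the same proposition identifies the contributing $\alpha$ as exactly those of shape $\tau \cdot \mu_1^{p_1} \cdots \mu_s^{p_s}$, each producing the monomial $\lambda_1^{\abs{\mu_1} p_1} \cdots \lambda_s^{\abs{\mu_s} p_s}$. Matching yields the polynomial identity
\[
	c_\tau = \sum_{(p_1, \ldots, p_s)} c_{\tau \cdot \mu_1^{p_1} \cdots \mu_s^{p_s}} \, \lambda_1^{\abs{\mu_1} p_1} \cdots \lambda_s^{\abs{\mu_s} p_s}
\]
in the free $\lambda_i$; matching monomial coefficients forces $c_{\tau \cdot \mu_1^{p_1} \cdots \mu_s^{p_s}} = 0$ whenever some $p_i > 0$. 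Iterating over all trees $\tau$ arising as $\tau_\alpha$ for some $\alpha$ in the support of $\gamma$ then gives $\gamma \in \spangle{\Tree_k}$.

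The main obstacle is recognising that \autoref{prop:monomial} only controls the root component $\partial_1$ and that this is nevertheless enough: varying the $\lambda_i$ independently produces polynomial identities that isolate each coefficient $c_{\tau \cdot \mu_1^{p_1} \cdots \mu_s^{p_s}}$ separately. Once the probe vector field $f$ is assembled from the correct finite list of dual vector fields of \autoref{le:eldual}, the argument reduces to polynomial interpolation, with the heavy lifting already absorbed into the dependency-graph machinery behind \autoref{prop:monomial} and the partitioned-structure lemma \autoref{le:parteldiff}.
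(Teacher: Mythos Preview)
Your proof is correct and follows essentially the same approach as the paper's: fix a tree $\tau$, apply the probe vector field $f_\tau \oplus \lambda_1 f_{\mu_1} \oplus \cdots$ from \autoref{prop:monomial}, use equivariance to compare with $f_\tau$ alone, and read off a polynomial identity in the $\lambda_i$ that forces the aromatic coefficients to vanish. The paper packages your individual coefficients $c_{\tau \cdot \mu_1^{p_1} \cdots \mu_s^{p_s}}$ into a single polynomial $p$ and observes it must be constant, but the argument is the same.
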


\begin{proof}
	Fix a tree $\tau\in T$.
	The element $\gamma \in \spangle{\Graph[1][k]}$ can be written as
	\begin{align}
		\gamma = p(\mu_1,\ldots,\mu_m) \tau + \gamma'
	\end{align}
	where $\gamma'$ does not contain any occurrence of the tree $\tau$,
	and $p$ is a polynomial over some molecules $\mu_1,\ldots,\mu_n$.
	For instance, if $\gamma = \paren[\big]{3(\mu_1)^2 \mu_2 + (\mu_1)^3} \tau + \gamma'$, then $p(X_1,X_2) = 3X_1^2 X_2 + X_1^3$.
	The goal is now to prove that $p$ is constant.
	Recall the notations of \autoref{prop:monomial}, in particular the special vector $f\coloneqq f_{\tau}\oplus \lambda_1 f_{\mu_1}\oplus\ldots\oplus \lambda_m f_{\mu_m}$ and the projection $\pi$ which projects on the first components of the decomposition of $f$.
	Using \autoref{prop:monomial} we have
	\begin{align}
		\label{eq:plambda}
		\pi \eldappl{\gamma}{f}[N] = p(\lambda_1^{\abs{\mu_1}}, \ldots, \lambda_m^{\abs{\mu_m}}) \partial_1 
		.
	\end{align}
	Now, using that the sequence $\eta$ is equivariant, that is, using \autoref{prop:blockequi} for $f = f_{\tau}\oplus g$, one obtains $\pi \eldappl{\gamma}{f} = \eldappl{\gamma}{f_{\tau}}[\abs{\tau}] $.
	Now, using \autoref{prop:monomial}, we obtain $\eldappl{\gamma}{f_\tau}[\abs{\tau}] = p(0,\ldots,0) \partial_1$, so
	\begin{align}
		\label{eq:pzero}
		\pi \eldappl{\gamma}{f}[N] = p(0,\ldots,0) \partial_1
		.
	\end{align}
	We deduce from \eqref{eq:pzero} and \eqref{eq:plambda} that as $p(\lambda_1^{\abs{\mu_1}},\ldots,\lambda_m^{\abs{\mu_m}}) = p(0,\ldots,0)$, the polynomial $p$ must be constant.
	Finally, we conclude that $\gamma$ is in $\spangle{\Tree_k}$.
\end{proof}

\section{Transfer argument}\label{sec:trans}

\subsection{Transfer to the Taylor terms}

When dealing with B-series, one treats each order separately, so that statements are stable with respect to truncations (or, put differently, with respect to the inverse limit topology).
The terms in a B-series correspond to the terms in the Taylor expansion of a map, so statements about B-series implies statements about Taylor terms and vice versa.
Notice, however, that \autoref{thm:main} says something about the family $\phi$ as a whole, not about its Taylor terms.
Therefore, to prove \autoref{thm:main} through \autoref{prop:bijection}, we need to show that a Taylor term of an  affine equivariant \intmap is also an affine equivariant \intmap; we call this a \emph{transfer argument}.

We first show that affine equivariant sequences preserve fixed points:
\begin{lemma}
	\label{prop:equilocal}
	If $\phi$ is an affine equivariant sequence of smooth maps, then for any integer $n$, any point $x\in\RR^n$, and any  vector field $f \in \rvf{n}$, we have 
	\begin{equation}
		f(x) = 0 \implies {\phi_n}\paren{f}(x) = 0
	.
\end{equation}
	This implies in particular $\phi_n(0) = 0$.
\end{lemma}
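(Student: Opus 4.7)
The plan is to use affine equivariance with respect to the trivial affine injection $a \in \aff{0}{n}$ sending the unique point of $\RR^0$ to $x$. Concretely, set $a(\star) = x$, i.e.\ the linear part is the (necessarily zero) $n \times 0$ matrix and the translation is $b = x$. The only vector field on $\RR^0$ is the zero vector field $0 \in \rvf{0}$, so the only conceivable Taylor term in that dimension also vanishes; this is the source of the rigidity we will exploit.

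Next I verify that $0 \affrel[a] f$ whenever $f(x) = 0$. Unpacking the intertwining condition \eqref{eq:defrelated} at the unique point $\star \in \RR^0$ gives $f(a(\star)) = A \cdot 0(\star)$, i.e.\ $f(x) = 0$, which is exactly the hypothesis. Thus the intertwining relation holds, and by the affine equivariance assumption we get $\phi_0(0) \affrel[a] \phi_n(f)$. Writing this relation out at $\star$ gives
\begin{equation}
    \phi_n(f)(x) = \phi_n(f)(a(\star)) = A \cdot \phi_0(0)(\star) = 0,
\end{equation}
since $A$ is the zero linear map from $\RR^0$ to $\RR^n$. This is the desired conclusion.

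For the final assertion, I specialise to $f = 0 \in \rvf{n}$. Since the zero vector field vanishes at every point, the argument above applies for every $x \in \RR^n$, yielding $\phi_n(0)(x) = 0$ for all $x$, i.e.\ $\phi_n(0) = 0$. I do not anticipate any real obstacle: the proof is essentially a direct unpacking of the equivariance definition applied to the degenerate source dimension $m = 0$, and the only thing worth checking carefully is that $\RR^0$ is an allowed source dimension and that $0 \in U_0$, both of which hold because $\rvf{0}$ is the trivial vector space and $U_0$ is a neighbourhood of its only element.
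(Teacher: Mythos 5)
Your proof is correct and is essentially identical to the paper's own argument: both use the trivial affine injection $a\in\aff{0}{n}$ with $a(\star)=x$, observe that $f(x)=0$ is exactly the intertwining condition between the zero vector field on $\RR^0$ and $f$, and conclude $\phi_n(f)(x)=A\,\phi_0(0)(\star)=0$ by equivariance. Your write-up merely makes explicit the details (the zero linear part $A$, the deduction of $\phi_n(0)=0$ by taking $f=0$) that the paper leaves implicit.
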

\begin{proof}
	Consider the trivial affine injection $\RR^0 \to \RR^n$ defined by $a(0) = x$.
	The mapping $a$ intertwines the zero vector field on $\RR^0$ and $f$, so by equivariance of $\phi_n$, we have: ${\phi_n}\paren{f}(x) =  0$.
\end{proof}

Let $\phi_n\colon \rvf{n}\to\rvf{n}$ be a smooth map, and consider, as in~\autoref{sec:main}, the $k$:th Taylor coefficient $D^k \phi_n(0)$.
Recall that this is a symmetric multi-linear, vector valued map.
The corresponding homogeneous polynomial $\Delta_k(\phi_n)\colon\rvf{n}\to\rvf{n}$ is
\begin{align}
	\Delta_k(\phi_n)(f) \coloneqq D^k\phi_n(0)[f,\ldots,f]
	.
\end{align}

The following transfer argument is the main result of this section.
\begin{proposition}
	\label{pro:transfer}
	Let $\phi=\set{\phi_n}_{n\in\NN}$ be an affine equivariant sequence of smooth  maps. 
	Then, for any fixed $k\in\NN$, the family of maps $\set{\Delta_k(\phi_n)}_{n\in\NN}$ is an affine equivariant sequence of homogeneous polynomials of degree $k$.
\end{proposition}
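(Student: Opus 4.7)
The plan is to prove affine equivariance of the sequence $\{\Delta_k(\phi_n)\}_{n\in\NN}$ by a scaling-plus-Taylor argument. Homogeneity of degree $k$ is automatic from the definition $\Delta_k(\phi_n)(f) = D^k\phi_n(0)[f,\ldots,f]$: it is the diagonal evaluation of a symmetric $k$-linear form, hence a homogeneous polynomial of degree $k$ in $f$. So only equivariance needs work.

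Fix $a\in\aff{m}{n}$ with $a(x)=Ax+b$, and suppose $f\affrel[a] g$, i.e., $g(Ax+b)=Af(x)$. The key observation is that intertwining is scale-invariant: for every $\varepsilon\in\RR$, $(\varepsilon g)(Ax+b)=A(\varepsilon f)(x)$, so $\varepsilon f\affrel[a]\varepsilon g$. For $\varepsilon$ small enough that $\varepsilon f\in U_m$ and $\varepsilon g\in U_n$, the affine equivariance of $\phi$ yields $\phi_m(\varepsilon f)\affrel[a]\phi_n(\varepsilon g)$, which evaluated at an arbitrary $x\in\RR^m$ reads
\[
\phi_n(\varepsilon g)(Ax+b) \;=\; A\,\phi_m(\varepsilon f)(x).
\]
Both sides are smooth vector-valued functions of $\varepsilon$ on a neighbourhood of $0$, and by \autoref{prop:equilocal} we have $\phi_n(0)=0$ and $\phi_m(0)=0$, so both sides vanish at $\varepsilon=0$. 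I then differentiate $k$ times in $\varepsilon$ at $\varepsilon=0$: since the map $\varepsilon\mapsto\varepsilon g$ is linear with derivative $g$ and higher derivatives vanishing, the chain rule gives $\frac{d^k}{d\varepsilon^k}\big|_{\varepsilon=0}\phi_n(\varepsilon g) = D^k\phi_n(0)[g,\ldots,g] = \Delta_k(\phi_n)(g)$, and symmetrically on the right-hand side, so
\[
\Delta_k(\phi_n)(g)(Ax+b) \;=\; A\,\Delta_k(\phi_m)(f)(x).
\]
This is exactly $\Delta_k(\phi_m)(f)\affrel[a]\Delta_k(\phi_n)(g)$, as required.

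The argument is essentially mechanical and I do not expect a substantive obstacle. The only point needing a moment of care is the identification of the ordinary $k$-th derivative $\frac{d^k}{d\varepsilon^k}\big|_0\phi_n(\varepsilon g)$ with the symmetric multilinear evaluation $D^k\phi_n(0)[g,\ldots,g]$; this is a routine chain-rule computation, made particularly clean because the inner map $\varepsilon\mapsto\varepsilon g$ is linear (its higher derivatives vanish) and we evaluate at the origin (so only the top-order term in Taylor's formula \eqref{eq:taylor} contributes). The conceptual content is simply that scale-invariance of the intertwining relation lets one feed the one-parameter family $\varepsilon f \affrel[a] \varepsilon g$ into the hypothesis, thereby decoupling the equivariance of the full nonlinear $\phi$ into the equivariance of each Taylor coefficient separately.
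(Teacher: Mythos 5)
Your proof is correct and follows essentially the same route as the paper: both exploit that intertwining is preserved under scaling ($\varepsilon f \affrel \varepsilon g$), apply the equivariance of $\phi$ to the scaled fields, and then differentiate at zero to isolate the $k$:th Taylor coefficient. The only cosmetic difference is that you take a single $k$:th derivative $\frac{d^k}{d\varepsilon^k}\big|_{\varepsilon=0}$ whereas the paper uses the equivalent polarized form $\partial_{t_1}\cdots\partial_{t_k}\,\phi_n\big((t_1+\cdots+t_k)f\big)\big|_{t_1=\cdots=t_k=0}$.
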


\begin{proof}
	First, by \autoref{prop:equilocal}, we have that $\phi_n(0) = 0$.
The Taylor polynomial $\Delta_k(\phi_n)$ for a smooth map $\phi_n\colon\rvf{n}\to\rvf{n}$ such that $\phi_n(0) = 0$ is given by \cite[\S5.11]{KrMi97}
\begin{align}
	\label{eq:polyder}
	\Delta_k(\phi_n)(f) = \partial_{t_1}\cdots\partial_{t_k} \phi_n((t_1 + \cdots + t_k)f)|_{t_1=0,\ldots,t_k=0}
	.
\end{align}

Consider two vector fields $f\in\rvf{n}$ and $g\in\rvf{m}$, related by an affine map $a(x) = A x + b$, i.e.,
\(
	f \affrel g
\).
Then $(\lambda f) \affrel (\lambda g)$ for any $\lambda\in\RR$.
Since $\phi$ is an  equivariant sequence, we have $\phi_n(\lambda f) \affrel \phi_m(\lambda g)$, 
which, by its definition \eqref{eq:defrelated}, means $\phi_m(\lambda g) (a x) = A \phi_n(\lambda f)(x)$ for $x\in\RR^n$.
Therefore, taking $\lambda = t_1 + \cdots + t_n$, we obtain
\begin{align}
	\phi_m\paren[\big]{(t_1 + \cdots + t_k)g}(a x) = A \phi_n \paren[\big]{(t_1 + \cdots + t_k)f}(x)
	.
\end{align}
We conclude, using the defining property of $\Delta_k$ in \eqref{eq:polyder}, that $\Delta_k(\phi_n)(f) \affrel \Delta_k(\phi_m)(g)$.
\end{proof}

\subsection{Extension Principle}

We now observe that by \autoref{prop:equilocal}, and because we have polynomials instead of nonlinear maps, we can apply Peetre's theorem and assert that $\phi_n$ depends on a finite number of derivatives of the vector field.
Furthermore, by equivariance at each dimension, we can
restrict the study to that of (linear) equivariant sequences of $k$-forms on polynomial vector fields as defined in \autoref{sec:equiseq}.

We only give a sketch of the proof, as the details are similar to \cite{mk-verdier}, the main novelty being using sequences instead of maps.

\begin{proposition}
	\label{prop:transferpoint}
	There is a bijection between the space of affine equivariant sequences of homogeneous polynomials of degree $k$ on vector fields, and the space of (linear) equivariant sequences of $k$-forms on polynomial vector fields.
\end{proposition}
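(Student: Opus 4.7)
The plan is to use Peetre's theorem together with translation equivariance to relate affine equivariant polynomial maps on compactly supported vector fields with linear equivariant $k$-forms on polynomial vector fields. The argument proceeds in two directions, each constructed explicitly.

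For the forward direction, suppose $\phi = \{\phi_n\}$ is an affine equivariant sequence of homogeneous polynomials of degree $k$. Since translations $y \mapsto y + x$ are affine, equivariance gives $\phi_n(f)(x) = \phi_n(f(\cdot + x))(0)$ for any $x \in \RR^n$, so $\phi_n$ is determined by its values at $0$. By \autoref{prop:equilocal}, applied pointwise through translations, $\phi_n(f)(0)$ vanishes whenever $f$ vanishes on a neighbourhood of $0$. Since $\phi_n$ is in addition a homogeneous polynomial of degree $k$, Peetre's theorem then implies that $\phi_n(f)(0)$ depends only on a finite-order jet of $f$ at $0$. This jet is naturally an element of $\pvf[n]$, so $\phi_n$ induces a homogeneous polynomial of degree $k$ from $\pvf[n]$ to $\RR^n$, and by reassembly through translations, a $k$-form $P_n \colon \pvf[n] \to \pvf[n]$.

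For the reverse direction, given a linear equivariant sequence $\{P_n\}$ of $k$-forms on polynomial vector fields, I would define $\phi_n(f)(x)$ by applying $P_n$ to the Taylor polynomial of $f$ at $x$, regarded as an element of $\pvf[n]$, and then evaluating the resulting polynomial vector field at $0$. Because $P_n$ depends polynomially on only finitely many derivatives of its argument, the output $\phi_n(f)$ inherits compact support from $f$, and the two constructions are easily seen to be mutually inverse.

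The final step is to verify that linear equivariance of $\{P_n\}$ corresponds to affine equivariance of $\{\phi_n\}$. Any affine map decomposes into a linear part and a translation; translations are absorbed into the construction of $P_n$ by fixing the evaluation point at $0$ and so impose no additional constraint, while the linear part corresponds precisely to linear equivariance, since the Taylor jet operation commutes with linear changes of variables. The main technical obstacle is the careful invocation of Peetre's theorem and the verification of the bijection at the level of jets; these are routine for homogeneous polynomial maps, and the argument is parallel to the one carried out in \cite{mk-verdier}, the only novelty being that the bookkeeping is done at the level of sequences rather than individual maps.
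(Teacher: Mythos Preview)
Your proposal is correct and follows essentially the same approach as the paper: both use \autoref{prop:equilocal} to establish locality, invoke Peetre's theorem (via the extension principle of \cite{mk-verdier}) to reduce to a finite-jet dependence, and then check that affine equivariance of the sequence corresponds to linear equivariance of the resulting $k$-forms. The paper's own proof is only a sketch and explicitly defers the details to \cite{mk-verdier}, so your more expanded treatment is in fact a faithful elaboration of the same argument, with the same acknowledgement that the only novelty over \cite{mk-verdier} is the bookkeeping at the level of sequences.
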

\begin{proof}
	Observe that \autoref{prop:equilocal} implies in particular that $\phi_n$ is support non-increasing, or \emph{local}, (see \cite[\SS\,2.3]{mk-verdier}) so we can use the extension principle in \cite[Proposition~4.2]{mk-verdier}
	We thus obtain for each dimension $n$ a map $\lfunc_n$ defined from polynomial vector fields to $\RR^n$; moreover, this map is $\GL[n]$-equivariant.
The relation between $\func$ and $\lfunc$ is $\func(f)(x) = \lfunc\paren[\big]{T(f)}(0)$, where $T$ is the Taylor development of $f$.
Note that $\lfunc$ has finite order, i.e., it only needs the Taylor development up to some finite order $k$.

	Consider an affine equivariant sequence $\func_n$ of local $k$-forms on vector fields.
	We thus obtain a sequence $\lfunc_n$ of $\GL[n]$ equivariant maps.
	As the sequence $\func_n$ is affine equivariant, we obtain that the sequence $\lfunc_n$ is equivariant in the sense of \autoref{sec:equiseq}.

	On the other hand, given an equivariant sequence in the sense of \autoref{sec:equiseq}, we obtain a sequence of affine-equivariant maps $\func_n$.
	It is then straightforward to check that this sequence is in fact an equivariant sequence.
\end{proof}

\section*{Acknowledgements}

Klas Modin is supported by the Swedish Research Council (contract VR-2012-335).
Olivier Verdier is supported by the J.C.~Kempe memorial fund (grant number SMK-1238).
Robert McLachlan is supported by the Marsden Fund of the Royal Society of New Zealand.

We express our greatest gratitude to the referees for their numerous useful remarks, in particular for noticing that one can avoid the assumption of locality.
We are also grateful to Alexander Schmeding for giving us the right functional analytic framework of compactly supported vector fields in the definition of an integrator.

\bibliographystyle{plainnat}
\bibliography{ref}

\end{document}